\theoremstyle{plain}
\newtheorem{theorem}{Theorem}[section]
\newtheorem{lemma}[theorem]{Lemma}
\newtheorem{proposition}[theorem]{Proposition}
\newtheorem{corollary}[theorem]{Corollary}
\providecommand{\customgenericname}{}
\newcommand{\newcustomtheorem}[2]{%
	\newenvironment{#1}[1]
	{%
		\renewcommand\customgenericname{#2}%
		\renewcommand\theinnercustomgeneric{##1}%
		\innercustomgeneric
	}
	{\endinnercustomgeneric}
}
\theoremstyle{definition}
\newtheorem{definition}[theorem]{Definition}
\newtheorem{example}[theorem]{Example}
\newtheorem{remark}[theorem]{Remark}
\numberwithin{equation}{section}
\def \begineq{\begin{equation}}
\def \endeq{\end{equation}}
\def \bb{\mathbb}
\def \CC{{\bb{C}}}
\def \QQ{{\bb{Q}}}
\def \RR{{\bb{R}}}
\def \ZZ{{\bb{Z}}}
\def \Aut{{\rm Aut}}
\def \qed{\hfill $\square$ \vspace{0.03in}}
\begin{document}

\title[ Resolution of singularities and equivariant cobordism ]{ Resolution of singularities of toric orbifolds and equivariant cobordism of contact toric manifolds}

\author[K. Brahma]{Koushik Brahma}
\address{Department of Mathematics, Indian Institute of Technology Madras, Chennai 600036, India}
\email{koushikbrahma95@gmail.com}

\author[S. Sarkar]{Soumen Sarkar}
\address{Department of Mathematics, Indian Institute of Technology Madras, Chennai 600036, India}
\email{soumen@iitm.ac.in}

\author[S. Sau]{Subhankar Sau}
\address{Department of Mathematics, Indian Institute of Technology Madras, Chennai 600036, India}
\email{subhankarsau18@gmail.com}

\date{\today}

\subjclass[2020]{14E15, 57R85, 52B11, 14M25, 57S12}

\keywords{Resolution of singularity, toric orbifold, torus action, quasi-contact toric manifold, equivariant cobordism}

\abstract 
Toric orbifolds are a generalization of simplicial projective toric varieties. In this paper, we show that there is a resolution of singularities of a toric orbifold. In a different category, the class of quasi-contact toric manifolds contains the class of good contact toric manifolds. We prove that a quasi-contact toric manifold is equivariantly a boundary. Moreover, we conclude that good contact toric manifolds and generalized lens spaces are equivariantly boundaries. 
\endabstract

\maketitle

\section{Introduction}\label{intro}
Toric manifolds were introduced in the pioneering paper \cite{DJ} by Davis and Januszkiewicz. These manifolds are topological generalization of smooth projective toric varieties. The paper \cite{DJ} considered the standard action of the compact abelian torus $T^n$ on $\CC^n$ as the local model to define toric manifolds. Briefly, a $2n$-dimensional smooth manifold with a locally standard $T^n$-action is called a toric manifold if the orbit space has the structure of an $n$-dimensional simple polytope.
Moreover, \cite[Section 7]{DJ} initiated the notion of toric orbifolds generalizing toric manifolds. These orbifolds are explicitly studied in \cite{PS} with the name `quasitoric orbifolds'. Examples of toric orbifolds include simplicial projective toric varieties.

On the other hand, contact toric manifolds are odd-dimensional analogues of symplectic toric manifolds with Hamiltonian torus actions, see \cite{BoGa}. Lerman provided a complete classification of compact connected contact toric manifolds in \cite[Theorem 2.18]{Lerman}. Motivated by the work of Davis and Januszkiewicz \cite{DJ}, Luo  discussed the combinatorial construction of good contact toric manifolds and studied some of their topological properties in \cite{Luo-Thesis}.

Recently,  the construction of toric manifolds in \cite{DJ} has been extended to introduce the notion of \emph{locally k-standard $T$-manifolds}, in \cite{SaSo} where $T$ is a compact abelian torus. The paper \cite{SaSo} considers the invariant subset $\CC^{n}\times (S^1)^k$ of $\CC^{n+k}$ with respect to the standard $T^{n+k}$-action, and calls the restricted $T^{n+k}$-action on  $\CC^{n}\times (S^1)^k$ as the local model to define a locally $k$-standard $T$-manifold. In particular, when $k=1$ we call this manifold a \emph{quasi-contact toric manifold}. Briefly, a $(2n+1)$-dimensional smooth compact $T^{n+1}$-manifold $N$ is called a quasi-contact toric manifold if each point of $N$ has an invariant neighbourhood which is diffeomorphic to an invariant open subset of  $\CC^{n}\times S^1$ such that the orbit space ${N}/{T^{n+1}}$ has the structure of an $n$-dimensional simple polytope. The article \cite{SaSo} shows that the category of quasi-contact toric manifolds contains all good contact toric manifolds. We use the term `quasi-contact toric manifold' as the topological counterpart of `good contact toric manifold'.

Resolution of singularity is a widely used tool in algebraic geometry nowadays, see \cite{Kol, cut}. One can trace back to Newton and Riemann for the idea of resolution of singularities of the curves. If $X$ is a singular toric variety then there is a resolution of singularities of $X$, see \cite[Chapter 2]{Ful}. In algebraic topology, \cite{GP} discusses the resolution of singularities of four dimensional toric orbifolds. We note that there are infinitely many toric orbifolds which are not toric varieties. For example, an equivariant connected sum of two weighted projective spaces is a toric orbifold but not a toric variety. In this article, we discuss the resolution of singularities for any toric orbifolds. Our technique is different than the proof of the resolution of singularities of a singular toric variety.

Lev Pontryagin introduced the notion of cobordism in \cite{Pon47}. Cobordism theory discusses about an equivalence on the same dimensional compact manifolds using the concept of the boundary of a manifold. Considering the disjoint union as addition, one can get an abelian group structure on these equivalence classes. If a manifold is a boundary of a manifold with boundary then it is called null-cobordant. The cobordism groups could be computed through homotopy theory using the Thom construction, see \cite{Tho}. We have a complete information about oriented, non-oriented and complex cobordism rings.
The notion of equivariant cobordism was introduced in early 1970's, see \cite{ Hook73-1,Hook73, Peter}.
The equivariant cobordism rings for some of the nontrivial groups can be found in \cite{Han}. However, they are not known for any nontrivial groups. One of the key reason is that in equivariant category, the Thom transversality theorem does not hold. Thus the equivariant cobordism theory cannot be reduced to the equivariant homotopy theory. 
In this article, we study the equivariant cobordism of quasi-contact toric manifolds, good contact toric manifolds and generalized lens spaces.

The paper is organized as follows.
 In Section \ref{sec_res_od_sing}, we recall the notion of a toric manifold and a toric orbifold following \cite{DJ}.  
 Then we recall the notion of blowup of a simple polytope as well as blowup of a toric orbifold following \cite{BSS}. We discuss the orbifold singularity on any face in the orbit space of a toric orbifold. Then we apply the techniques of blowup of a toric orbifold for resolution of singularity and prove the following result. 
\begin{customthm}{A}[Theorem \ref{thm_res_sing}]
	For a toric orbifold $M(P,\lambda)$, there exists a resolution of singularities $$M(P^{(d)},\lambda^{(d)})\to \dots \to M(P^{(1)},\lambda^{(1)})\to M(P,\lambda)$$ such that $M(P^{(d)},\lambda^{(d)})$ is a toric manifold, the toric orbifold $M(P^{(i)},\lambda^{(i)})$ is obtained by a blowup of $M(P^{(i-1)},\lambda^{(i-1)})$ for $i=1,2,\dots,d$ and the arrows $\rightarrow$ indicate the associated blowups.
\end{customthm}
 
 In Section \ref{sec_tctm}, we recall the concept of quasi-contact toric manifolds and discuss some of their properties. The idea of the construction of these spaces is similar to the construction of toric manifolds introduced by Davis and Januszkiewicz \cite{DJ}. 
 Generalized lens spaces \cite{SS} and good contact toric manifolds are some well known examples of quasi-contact toric manifolds, see Example \ref{ex_gen_lens_sp} and Example \ref{ex_gd_ct_toric_mfd}.
 Then we prove the following.
 \begin{customthm}{B}[Theorem \ref{thm_zoro_cob}]
 Let $N$ be a $(2n+1)$-dimensional quasi-contact toric manifold. Then there is a smooth oriented $T^{n+1}$-manifold with boundary $M$ such that $\partial{M}$ is equivariantly diffeomorphic to $N$.
\end{customthm}
 As a consequence, one can obtain that good contact toric manifolds and generalized lens spaces are equivariantly cobordant to zero, and hence all the Stiefel-Whitney numbers of these manifolds are zero. We note that a lens space is a contact toric manifold.

\section{Resolution of singularities of toric orbifolds}\label{sec_res_od_sing}
In this section, we discuss the constructive definition of toric manifolds and toric orbifolds following \cite{DJ}. 
These spaces are even dimensional orbifolds and equipped with half-dimensional locally standard torus actions where the orbit spaces are simple polytopes. We also recall the notion of blowup of a toric orbifold along certain invariant suborbifolds. We discuss when an open convex bounded subset of $\RR^k$ contains an integral point. We prove that there is a resolution of singularities of a toric orbifold.

A convex hull of finitely many points in $\RR^n$ for some $n \in \ZZ_{\geq 1}$ is called a convex polytope. If each vertex (zero dimensional face) in an $n$-dimensional convex polytope is the intersection of $n$ facets (codimension one faces), then the polytope is called a simple polytope. One can find the basic properties of simple polytopes in \cite{Zi, BP-book}. For a simple polytope $P$ we denote the vertex set of $P$ by $V(P):=\{b_1,\dots,b_m\}$ and the facet set by $\mathcal{F}(P):=\{F_1,\dots,F_r\}$ throughout this paper.

\begin{definition}
Let $P$ be an $n$-dimensional simple polytope and $\lambda \colon \mathcal{F}(P) \rightarrow \mathbb{Z}^{n}$ a function such that $\lambda(F_i)$ is primitive for $i \in \{1, \ldots, r\}$ and
\begin{equation}\label{Eq_lin ind vec}
\{\lambda(F_{i_1}),\dots,\lambda(F_{i_k})\}~ \text{is linearly independent if } \bigcap_{j=1}^k F_{i_j}\neq \varnothing.
 \end{equation}
\noindent Then $\lambda$ is called an $\mathcal{R}$-{characteristic function} on $P$, and the pair $(P,\lambda)$ is called an $\mathcal{R}$-{characteristic pair}.

If the set $\{\lambda(F_{i_1}),\dots,\lambda(F_{i_k})\}$ in \eqref{Eq_lin ind vec} spans a $k$-dimensional unimodular submodule of $\ZZ^{n}$, then $\lambda$ is called a characteristic function. In this case, the pair $(P,\lambda)$ is called a characteristic pair.
\end{definition}

Observe that a characteristic function is an $\mathcal{R}$-characteristic function. We denote $\lambda(F_i)$ by $\lambda_i$ and call it the $\mathcal{R}$-{characteristic vector} or characteristic vector assigned to the facet $F_i$ according to the situation.

Next, we recall the construction of a toric orbifold from an $\mathcal{R}$-characteristic pair $(P,\lambda)$. We denote the standard $n$-dimensional torus by $T^n$. Note that $\mathbb{Z}^n$ is the standard $n$-dimensional lattice in the Lie algebra of $T^n$. Also, each $\lambda_i \in \ZZ^n$ determines a line in $\mathbb{R}^n(=\ZZ^n \otimes_{\ZZ} \RR)$, whose image under the exponential map $exp \colon \mathbb{R}^n \to T^n$ is a circle subgroup, denoted by $T_i$.

Let $F$ be a codimension-$k$ face of $P$ where $0 < k \leq n$ and $\mathring{F}$ the relative interior of $F$. Then $F=\bigcap_{j=1}^k F_{i_j}$ for some unique facets $F_{i_1}, \dots, F_{i_{k}}$ of $P$.
  Let 
 \begin{equation*}
 T_F := \big< T_{i_1}, \dots, T_{i_{k}} \big>    
 \end{equation*} 
 be the $k$-dimensional subtorus of $T^n$ generated by $ T_{i_1}, \dots, T_{i_{k}}$. We define $T_P: =1 \in T^n$.

We consider the identification space $M(P,\lambda): =(T^n \times P) / \sim$, where the equivalence relation $\sim$ is defined by
\begin{equation*}
(t,x) \sim (s,y)~ \text{if and only if}~ x=y\in \mathring{F} ~\text{and}~ t^{-1}s \in T_F.
\end{equation*} 
The identification space $M(P,\lambda)$ has a $2n$-dimensional orbifold structure with a natural $T^n$-action induced by the group operation on the first factor of $T^n\times P$. The projection onto the second factor gives the orbit map
\begin{equation}\label{orbit map}
\pi \colon M(P,\lambda)  \to P \text{ defined by } [t,x]_{\sim} \mapsto x \nonumber
\end{equation}
where $[t,x]_{\sim}$ is the equivalence class of $(t,x)$.
An explicit construction of the orbifold structure on $M(P,\lambda)$ is discussed in \cite{PS} and \cite{GP}. Poddar and the second author gave the axiomatic definition of (quasi)toric orbifolds and showed that the constructive and axiomatic definitions of toric orbifolds are equivalent, see \cite[Proposition 2.9]{PS}.

We note that if $\lambda$ is a characteristic function, then $M(P, \lambda)$ is a toric manifold, see \cite{DJ}. The equivalence of the axiomatic and constructive definitions of toric manifolds is discussed in \cite{DJ}.

Now, we review the notion of blowup of a simple polytope $P$ as well as blowup of the toric orbifold $M(P, \lambda)$ along an invariant suborbifold. 
\begin{definition}\label{def_qtoric_orb}
	Let $P$ be an $n$-dimensional simple polytope in $\RR^n$ and $F$ a face of $P$. Take an $(n-1)$-dimensional hyperplane $H$ in $\RR^n$ such that $V(F)$ is a subset of the open half space $H_{>0}$  and $V(P) \setminus V(F)$ is a subset of the other open half space $H_{<0}$. Then $\widebar{P}:= P \cap H_{\leq 0}$ is called a blowup of $P$ along $F$.
\end{definition}

Note that $\widebar{P}\subset P$ and $\widebar{P}$ is an $n$-dimensional simple polytope. If $\mathcal{F}(P)=\{F_1,\dots,F_r\}$  and dim$(F)<(n-1)$ then the facets of $\widebar{P}$ is $\mathcal{F}(\widebar{P}):=\{\widebar{F}_1, \dots, \widebar{F}_r, \widebar{F}_{r+1}\}$ where
\begin{align}\label{eq_facet}
\widebar{F}_i := \begin{cases} F_i \cap \widebar{P} \quad &\text{for }	i=1, \dots, r,\\ H \cap P \quad &\text{for }	i=r+1. \end{cases}
\end{align}
\noindent In this paper, a blowup $\widebar{P}$ of $P$ might also be denoted by $P^{(1)}$ and a blowup of ${P}^{(\ell)}$ is denoted by $P^{(\ell+1)}$, for $\ell \geq 1$.

\begin{definition}\label{ex_blow_up}
Let $\widebar{P}$ be a blowup of a simple polytope $P$ along a face $F$. Let $(P, \lambda)$ and $(\widebar{P}, \bar{\lambda})$ be two $\mathcal{R}$-characteristic pairs such that $$\bar{\lambda}(\widebar{F}_i)=\lambda(F_i) \quad \text{ if }\widebar{F}_i=F_i \cap \widebar{P} \text{ for } F_i\in \mathcal{F}(P).$$ Then the toric orbifold $M(\widebar{P}, \bar{\lambda})$ is called a blowup of the toric orbifold $M(P, \lambda)$ along the suborbifold $\pi^{-1}(F)$.
\end{definition}

\begin{example}\cite[Example 5.2]{BSS}\label{rmk_char_fnc}
  Let $(P, \lambda)$ be an $\mathcal{R}$-characteristic pair and $\widebar{P}$ a blowup of $P$ along a codimension $k$ face $F$ where $0 < k \leq n$. Then $F=\bigcap_{j=1}^{k} F_{i_j}$ for some unique facets $F_{i_1}, \dots, F_{i_{k}}$ of $P$. 
Let $$\QQ(F) := \{ (c_1, \ldots, c_{k}) \in \QQ^{k}  ~|~ c_j \in \QQ \setminus \{0\} ~\mbox{and}~ \textbf{0} \neq \sum_{j=1}^{k} c_j \lambda_{i_j} \in \ZZ^n\}.$$ 
We define $\bar{\lambda} \colon \mathcal{F}(\widebar{P}) \to \mathbb{Z}^n$ by
\begin{align}\label{characteristic vector after blowup}
\bar{\lambda}(\widebar{F_i}):= 
     \begin{cases}
       \lambda_i &\quad\text{if}~ \widebar{F}_i = F_i \cap \widebar{P} \text { for } i=1,2,\dots,r\\
       \text{prim}(\sum_{j=1}^{k} c_j \lambda_{i_j}) &\quad\text{if}~ \widebar{F}_i=\widebar{F}_{r+1}~ \text{and}~(c_1, \dots, c_{k}) \in \QQ(F)
     \end{cases}
\end{align}
where prim$(\alpha)$ indicates the primitive vector of $\alpha \in \mathbb{Z}^n \setminus \{\textbf{0}\}$.
Then $\bar{\lambda}$ is an $\mathcal{R}$-{characteristic function} on $\widebar{P}$. The pair $(P,\lambda)$ and $(\widebar{P},\widebar{\lambda})$ satisfy Definition \ref{ex_blow_up}. Thus $M(\widebar{P},\bar{\lambda})$ is a blowup of the toric orbifold $M(P,\lambda)$ along the suborbifold $\pi^{-1}(F)$. 
\end{example}

Let  $(P, \lambda)$ be an $\mathcal{R}$-characteristic pair and $F$ a face as in Example \ref{rmk_char_fnc}.
Let $\lambda_{i_j} = \lambda(F_{i_j})$ for all $j=1,2,\dots,k$. Then the group
$$G_F(P,\lambda):= \frac{(\big< \lambda_{i_1},\lambda_{i_2},\dots,\lambda_{i_k} \big> \bigotimes_{\ZZ}\RR)\cap \ZZ^{n}}{\big< \lambda_{i_1},\lambda_{i_2},\dots,\lambda_{i_k} \big>}$$
is finite and abelian, where ${\big< \lambda_{i_1},\lambda_{i_2},\dots,\lambda_{i_k} \big>}$ is the submodule generated by $ \{\lambda_{i_1}, \lambda_{i_2}, \dots, \lambda_{i_k}\}$. This group measures the order of singularity of points in $\pi^{-1}(\mathring{F}) \subseteq M(P, \lambda)$. For simplicity, we may denote $G_F(P,\lambda)$ by $G_F$, whenever the context is clear. Note that if $F$ is a face of $F'$ then $|G_{F'}|$ divides $|G_F|$ and the group $G_F$ is trivial if $M(P,\lambda)$ is a toric manifold. 

We recall the definition of the volume of a parallelepiped in an inner product space. Let $\{u_1,u_2,\dots,u_k\}$ be an orthonormal basis in the $k$-dimensional real inner product space $V$. Let 
$$C_V:=\{\sum_{i=1}^{k}r_iu_i \in V ~|~ 0\leq r_i \leq \alpha_i, r_i\in \RR ~\mbox{and} ~ 0 < \alpha_i\in \RR\}.$$ Then $C_V$ is a $k$-dimensional parallelepiped and $$\mbox{vol}(C_V) =\alpha_1\alpha_2\cdots \alpha_k.$$

For a face $F=\bigcap_{j=1}^{k} F_{i_j}$, we consider the vector space $V_F=(\big< \lambda_{i_1},\lambda_{i_2},\dots,\lambda_{i_k} \big> \bigotimes_{\ZZ}\RR)$.
Let $\{v_1,\dots,v_k\}$ be a basis of the lattice $$L_F := (\big< \lambda_{i_1},\lambda_{i_2},\dots,\lambda_{i_k} \big> \bigotimes_{\ZZ}\RR)\cap \ZZ^{n}$$ in $V_F$. So $L_F=\ZZ v_1+\ZZ v_2+\dots+\ZZ v_k$.
Define $$C:=\{\sum_{i=1}^{k}r_iv_i~~|~~ 0\leq r_i < 1, r_i\in \RR\}.$$ This $C$ is called a fundamental parallelepiped for the lattice $L_F$. It is well known that $\mbox{vol}(C)$ is independent of the bases of $L_F$.
Let  
\begin{equation}\label{eq_para}
    C_F=\{\sum_{j=1}^{k}r_j\lambda_{i_j} ~~|~~ 0\leq r_j < 1, r_j\in \RR\}.
\end{equation}

Then $|G_F|$ measures the volume of the $k$-dimensional parallelepiped $C_F\subset V_F$ made by the $k$ vectors $\{\lambda_{i_1}, \lambda_{i_2}, \dots, \lambda_{i_k}\}$, where $|G_F|$ is the order of the group $G_F$.
Mathematically, one can write $$\mbox{vol}(C_F)=|G_F|\times \mbox{vol}(C).$$
The following Minkowski theorem says when an open convex subset of $\RR^n$ contains a non-zero lattice point.

\begin{proposition}\cite[Theorem 5.2]{OLD}\label{thm_min}
    Let $X$ be an open convex subset of a $k$-dimensional inner product space $V\subset \RR^n$ and $C$ the fundamental parallelepiped for the lattice $V\cap\ZZ^n$. If $X$ is symmetric about origin and has volume greater than $2^k vol(C)$, then $X$ contains a point ${\mathbf b}=(b_1,b_2,\dots,b_n)\in V\cap \ZZ^n$ other than the origin.
\end{proposition}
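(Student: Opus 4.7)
The plan is to carry out the classical Blichfeldt--Minkowski argument, adapted to the ambient inner product space $V$ equipped with the lattice $L := V \cap \ZZ^n$ of covolume $\mathrm{vol}(C)$.

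First I would reduce the hypothesis by rescaling. Setting $\tfrac{1}{2}X := \{x/2 \mid x \in X\}$, the fact that $V$ is $k$-dimensional gives $\mathrm{vol}(\tfrac{1}{2}X) = 2^{-k}\,\mathrm{vol}(X)$, so the assumption $\mathrm{vol}(X) > 2^k\,\mathrm{vol}(C)$ becomes the cleaner inequality $\mathrm{vol}(\tfrac{1}{2}X) > \mathrm{vol}(C)$. Note that $\tfrac{1}{2}X$ is still open, convex, and symmetric about the origin.

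Next I would apply a pigeonhole principle on the torus $V/L$. Fixing a fundamental domain $C_0$ for $L$ in $V$ (for instance, the parallelepiped $C$ itself), the disjoint decomposition $V = \bigsqcup_{v \in L}(C_0 + v)$ yields
\begin{equation*}
\mathrm{vol}(\tfrac{1}{2}X) \;=\; \sum_{v \in L} \mathrm{vol}\bigl((\tfrac{1}{2}X - v) \cap C_0\bigr).
\end{equation*}
If the sets $(\tfrac{1}{2}X - v) \cap C_0$ were pairwise disjoint, the right-hand side would be at most $\mathrm{vol}(C_0) = \mathrm{vol}(C) < \mathrm{vol}(\tfrac{1}{2}X)$, a contradiction. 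Hence there exist distinct $v, v' \in L$ and a common point $z \in (\tfrac{1}{2}X - v) \cap (\tfrac{1}{2}X - v')$; equivalently, there exist $x, y \in \tfrac{1}{2}X$ with $x - y = v - v' \in L \setminus \{\mathbf{0}\}$.

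Finally I would use symmetry and convexity to place $x - y$ inside $X$ itself. Write $x = x_1/2$ and $y = y_1/2$ with $x_1, y_1 \in X$. Symmetry gives $-y_1 \in X$, and convexity of $X$ then gives $\mathbf{b} := \tfrac{1}{2}x_1 + \tfrac{1}{2}(-y_1) = x - y \in X$. Thus $\mathbf{b}$ is the desired nonzero lattice point in $X \cap V \cap \ZZ^n$.

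The only real technical nuisance I expect is making the measure-theoretic step fully rigorous in the openness hypothesis: strictly speaking, the formula above requires $\tfrac{1}{2}X$ to be Lebesgue measurable in $V$ (which holds trivially since it is open) and uses countable additivity over the countable lattice $L$. If cleanliness is desired one can first approximate $X$ from inside by an open set of compact closure whose volume still exceeds $2^k\,\mathrm{vol}(C)$, run the argument there, and transfer the conclusion back. Apart from this, each step is routine.
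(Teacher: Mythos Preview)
Your argument is correct --- this is the standard Blichfeldt--Minkowski proof. However, there is nothing to compare against: the paper does not prove this proposition at all. It is quoted as \cite[Theorem 5.2]{OLD} and used as a black box in the proof of Lemma~\ref{lem_non_zero_int_pt}. So your proposal supplies a proof where the paper simply cites one.
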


\begin{lemma}\label{lem_non_zero_int_pt}
Let $\lambda$ be an $\mathcal{R}$-characteristic function on $n$-dimensional simple polytope $P$ and $F=\bigcap_{j=1}^k F_{i_j}$ a face in $P$. If $|G_F|>1$ then there exists a non-zero ${\lambda}_F\in \ZZ^{n}$ such that ${\lambda}_F=\sum_{j=1}^{k}c_j\lambda_{i_j}$ with $ |c_j| < 1$ and $c_j\in \QQ$. Moreover, if $|G_{F'}|=1$ for every face $F'$ properly containing $F$ in $P$, then there exists a non-zero ${\lambda}_F\in \ZZ^{n}$ such that ${\lambda}_F=\sum_{j=1}^{k}c_j\lambda_{i_j}$ where $0 < |c_j| < 1$ and $c_j\in \QQ$ for $j=1,2,\dots,k$. 
\end{lemma}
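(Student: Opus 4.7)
The plan is to deduce the first assertion from Minkowski's theorem (Proposition \ref{thm_min}) applied to the open, convex, origin-symmetric parallelepiped
$$X = \Big\{\sum_{j=1}^k r_j \lambda_{i_j} : r_j \in \RR,~ |r_j| < 1\Big\} \subset V_F.$$
Its $k$-dimensional volume inside $V_F$ equals $2^k \mbox{vol}(C_F) = 2^k |G_F| \mbox{vol}(C)$ by the volume identity recorded just before the lemma. The hypothesis $|G_F| > 1$ makes this strictly larger than $2^k \mbox{vol}(C)$, so Minkowski produces a nonzero $\lambda_F \in X \cap \ZZ^n = X \cap L_F$. The $\RR$-linear independence of $\{\lambda_{i_1},\ldots,\lambda_{i_k}\}$ from condition \eqref{Eq_lin ind vec} gives a unique expansion $\lambda_F = \sum_{j=1}^k c_j \lambda_{i_j}$ with $|c_j| < 1$, and the $c_j$ must lie in $\QQ$ because $\lambda_F$ and the $\lambda_{i_j}$ all lie in $\QQ^n$ and the $\lambda_{i_j}$ are $\QQ$-linearly independent.

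For the moreover statement, I would argue by contradiction using the face lattice of $P$. Starting with the $\lambda_F$ produced by the first part, suppose some coefficient vanishes, say $c_1 = 0$ after reindexing. Then $\lambda_F = \sum_{j=2}^k c_j \lambda_{i_j}$ lies in $L_{F'}$, where $F' = \bigcap_{j=2}^k F_{i_j}$ is a face of codimension $k-1$ that properly contains $F$. Because $\lambda_F \neq 0$ we still have some $c_j \neq 0$ for $j \geq 2$, and since each $|c_j| < 1$, the $\ZZ$-linear independence of $\{\lambda_{i_2},\ldots,\lambda_{i_k}\}$ prevents $\lambda_F$ from belonging to the integer sublattice $\langle \lambda_{i_2},\ldots,\lambda_{i_k}\rangle_{\ZZ}$. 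Hence $|G_{F'}| > 1$, contradicting the hypothesis that $|G_{F'}| = 1$ for every face properly containing $F$. Therefore every $c_j$ is nonzero, and combined with $|c_j| < 1$ this gives $0 < |c_j| < 1$.

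The main obstacle is a bookkeeping one: confirming that the symmetric parallelepiped $X$ and the fundamental parallelepiped $C$ of $L_F$ are set up in exactly the compatible way demanded by Proposition \ref{thm_min}, so that the strict inequality $\mbox{vol}(X) > 2^k \mbox{vol}(C)$ follows cleanly from $|G_F| > 1$. Once that is in place, the rationality of the coefficients and the face-lattice contradiction for the moreover step are routine.
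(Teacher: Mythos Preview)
Your proposal is correct and follows essentially the same approach as the paper: both apply Minkowski's theorem to the open symmetric parallelepiped $\{\sum_j r_j\lambda_{i_j} : |r_j|<1\}$ using the volume identity $\mbox{vol}(C_F)=|G_F|\,\mbox{vol}(C)$, and both handle the ``moreover'' clause by the same contradiction through the codimension-$(k-1)$ face $F'=\bigcap_{j\ge 2}F_{i_j}$. Your write-up is slightly more explicit about why the $c_j$ are rational and why $\lambda_F\notin\langle\lambda_{i_2},\ldots,\lambda_{i_k}\rangle_{\ZZ}$, but the substance is identical.
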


\begin{proof}
The parallelepiped $C_F$ constructed in \eqref{eq_para} is convex but not symmetric about the origin. Consider the union of $2^k$ many parallelepiped defined by $$\widetilde{C}_F=\{\sum_{j=1}^{k}r_j\lambda_{i_j} ~~|~~ -1 < r_j < 1, r_j\in \RR\}.$$
This is an open parallelepiped which is convex and symmetric about the origin such that $$\mbox{vol}(\widetilde{C}_F)=2^k\mbox{vol}(C_F)=2^k|G_F|\mbox{vol}(C)>2^k\mbox{vol}(C).$$ Then by Proposition \ref{thm_min}, there exists a non-zero point ${\lambda}_F\in \ZZ^n\cap \widetilde{C}_F$. 
Then there exist $c_j\in \QQ$ with $|c_j| < 1$ for all $j\in\{1,2,\dots,k\}$ such that ${\lambda}_F=\sum_{j=1}^{k}c_j\lambda_{i_j}$. 

Now for the second part, without loss of generality, we may assume that $c_1=0$. Consider the face $F':=\cap_{j=2}^{k}F_{i_j}$. Then $F'$ contains the face $F$ properly. By our assumption, we have $|G_{F'}|=1$. Then $\{\lambda_{i_2},\lambda_{i_3},\dots,\lambda_{i_k}\}$ is a $\ZZ$-basis of the lattice
 $(\big<\lambda_{i_2},\lambda_{i_3},\dots,\lambda_{i_k}\big>\bigotimes_{\ZZ}\RR)\cap \ZZ^{n}$. Observe that this contradicts the existence of a non-zero lattice point ${\lambda}_F=\sum_{j=2}^{k}c_j\lambda_{i_j}$ such that $|c_j|<1$ for $j\in\{2,3,\dots,k\}$. Therefore $0<|c_1|<1$. Similarly, we can show $0<|c_j|<1$ for $j\in\{2,3,\dots,k\}$.
\end{proof}

\begin{remark}
Let $F=\bigcap_{j=1}^k F_{i_j}$ be a face of $P$ and $\lambda$ an $\mathcal{R}$-characteristic function on $P$.
\begin{enumerate}
	\item If $|G_F|=1$, then 
	$$(\big<\lambda_{i_1},\lambda_{i_2},\dots,\lambda_{i_k}\big>\bigotimes_{\ZZ}\RR)\cap \ZZ^{n}=\ZZ \lambda_{i_1}+\ZZ \lambda_{i_2}+\dots+\ZZ \lambda_{i_k},$$
	and there does not exist a point of $(\big<\lambda_{i_1},\lambda_{i_2},\dots,\lambda_{i_k}\big>\bigotimes_{\ZZ}\RR)\cap \ZZ^{n}$ in the parallelepiped $C_F$ except the origin.
	\item  If $|G_F|>1$ then the non-zero lattice points of $(\big<\lambda_{i_1},\lambda_{i_2},\dots,\lambda_{i_k}\big>\bigotimes_{\ZZ}\RR)\cap \ZZ^{n}$ in the parallelepiped $C_F$ have a one-one correspondence to non-identity elements of the group $G_F$. So the total number of lattice points in the parallelepiped $C_F$ is $|G_F|$. Thus $\lambda_F$ obtained in the Lemma \ref{lem_non_zero_int_pt} is not unique if $|G_F|>2$.
\end{enumerate}
\end{remark}

\begin{theorem}\label{thm_res_sing}
	For a toric orbifold $M(P,\lambda)$, there exists a resolution of singularities
	$$M(P^{(d)},\lambda^{(d)})\to \dots \to M(P^{(1)},\lambda^{(1)})\to M(P,\lambda)$$ such that $M(P^{(d)},\lambda^{(d)})$ is a toric manifold, the toric orbifold $M(P^{(i)},\lambda^{(i)})$ is a blowup of $M(P^{(i-1)},\lambda^{(i-1)})$ for $i=1,2,\dots,d$ and the arrows $\to$ indicate the associated blowups.
\end{theorem}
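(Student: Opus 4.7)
The plan is to induct on a lexicographic complexity and reduce it by iterated blowups provided by Lemma \ref{lem_non_zero_int_pt}. Define
$$\Phi(M(P,\lambda)) := (M, N), \quad M := \max_{v \in V(P)} |G_v|, \quad N := \#\{v \in V(P) : |G_v| = M\},$$
ordered lexicographically. Since $|G_F|$ divides $|G_v|$ for every vertex $v$ of a face $F$, one has $M = 1$ if and only if $M(P,\lambda)$ is already a toric manifold. So it suffices to show that whenever $M > 1$ there exists a blowup $M(\overline{P}, \bar\lambda)$ of $M(P,\lambda)$ with strictly smaller $\Phi$; iterating this step finitely many times then produces the required tower.

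To construct the blowup, assume $M > 1$ and pick a vertex $v_0$ with $|G_{v_0}| = M$. I greedily climb the face lattice: set $F := v_0$, and while some $F' \supsetneq F$ still satisfies $|G_{F'}| > 1$, replace $F$ by such an $F'$. The face lattice being finite, this halts at a face $F = \bigcap_{j=1}^{k} F_{i_j}$ containing $v_0$ with $|G_F| > 1$ and $|G_{F'}| = 1$ for every $F' \supsetneq F$ --- precisely the hypothesis of the second part of Lemma \ref{lem_non_zero_int_pt}. That lemma yields a nonzero $\lambda_F = \sum_{j=1}^{k} c_j \lambda_{i_j} \in \ZZ^n$ with $0 < |c_j| < 1$ for every $j$. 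I then form the blowup $M(\overline{P}, \bar\lambda)$ along $\pi^{-1}(F)$ as in Example \ref{rmk_char_fnc}, with $\bar\lambda(\overline{F}_{r+1}) = \text{prim}(\lambda_F) = \lambda_F / m$ for the appropriate positive integer $m$.

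To verify $\Phi(M(\overline{P}, \bar\lambda)) < \Phi(M(P,\lambda))$, split the vertices of $\overline{P}$ into (a) vertices of $P$ disjoint from $F$, whose characteristic vectors and isotropy orders are unchanged, and (b) the vertices of the new facet $\overline{F}_{r+1}$ itself. Because $P$ is simple, $\overline{F}_{r+1}$ is combinatorially $\Delta^{k-1} \times F$, so its vertices are indexed by pairs $(l_0, w)$ with $l_0 \in \{1,\dots,k\}$ and $w \in V(F)$; the $n$ facets through such a vertex are $\overline{F}_{r+1}$, the $\overline{F}_{i_l}$ for $l \neq l_0$, and the $n-k$ facets through $w$ in $P$ other than the $F_{i_l}$. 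Expanding the $n \times n$ determinant of the corresponding characteristic vectors along the column $\text{prim}(\lambda_F)$ and using $\lambda_F = \sum_{l} c_l \lambda_{i_l}$ collapses it to $(c_{l_0}/m) \cdot \det[\lambda_{i_1}, \dots, \lambda_{i_k}, \lambda_{j_{k+1}}, \dots, \lambda_{j_n}]$, so $|G_{(l_0, w)}| = (|c_{l_0}|/m) \cdot |G_w|$. As $|c_{l_0}|/m < 1$ and $|G_w| \leq M$, this is a positive integer strictly less than $M$. Every vertex of $F$, including $v_0$, is removed by the blowup, and no new vertex attains $|G| = M$, so $(M,N)$ strictly decreases, completing the induction.

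The main technical obstacle is the determinant expansion yielding $|G_{(l_0,w)}| = (|c_{l_0}|/m)\, |G_w|$; this rests on the combinatorial identification of $\overline{F}_{r+1}$ with $\Delta^{k-1} \times F$ for a simple polytope, together with a Laplace-type expansion along the new characteristic vector.
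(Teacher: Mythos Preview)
Your proof is correct and follows essentially the same approach as the paper: choose a face $F$ maximal among those with $|G_F|>1$, apply Lemma~\ref{lem_non_zero_int_pt} to get the new characteristic vector, blow up along $F$, and compute via the determinant that every new vertex $(l_0,w)$ has $|G_{(l_0,w)}|=(|c_{l_0}|/m)\,|G_w|<|G_w|$, exactly as the paper does. The only substantive difference is the termination argument: the paper picks an arbitrary maximal $F\in\mathcal{L}$ and appeals loosely to finiteness of the $|G_F|$, whereas you force $F$ to contain a vertex of maximal $|G_v|$ and track the explicit lexicographic invariant $(M,N)$; your version makes the well-foundedness transparent (each blowup removes at least one vertex achieving $M$ and creates none), which is a mild sharpening of the paper's presentation rather than a different route.
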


\begin{proof}
	Let $$\mathcal{L}:=\{F ~\big{|}~ F \text{ is a face of } P \text{ and }|G_F(P,\lambda)|\neq 1\}.$$
	Define a partial order `$\leq$' on $\mathcal{L}$ 
by $F\leq F'$ if $F\subseteq F'$. 
	Without loss of generality, let $F$ be a maximal element in the set $\mathcal{L}$ with respect to the partial order `$\leq$'.
	
 Now consider the blowup of the polytope $P$ along the face $F$ and denote this blowup by $P^{(1)}$. If $\mathcal{F}(P)=\{F_1,\dots,F_r\}$, then we have $$\mathcal{F}(P^{(1)})=\{\widebar{F}_1, \dots, \widebar{F}_r, \widebar{F}_{r+1}\}$$ defined as in \eqref{eq_facet}. Let $F=\cap_{j=1}^k F_{i_j}$ where $F_{i_j}\in \mathcal{F}(P)$  and $\lambda_{i_j}=\lambda(F_{i_j})$ for $j\in \{1,2,\dots,k\}$. 
	We define a function ${\lambda}^{(1)} : \mathcal{F}(P^{(1)}) \to \ZZ^{n}$ by
\begin{align}\label{eq_lambda_blowup}
{\lambda}^{(1)}(\widebar{F_i}):= 
     \begin{cases}
       \lambda_i &\quad\text{if}~ \widebar{F}_i = F_i \cap \widebar{P} \text { for } i=1,2,\dots,r\\
       \text{prim}(\lambda_F)=\text{prim}(\sum_{j=1}^{k} c_j \lambda_{i_j}) &\quad\text{if}~ \widebar{F}_i=\widebar{F}_{r+1}
     \end{cases}
\end{align}
where ${\lambda}_F$ is obtained by Lemma \ref{lem_non_zero_int_pt}. Then $(P^{(1)},\lambda^{(1)})$ is an $\mathcal{R}$-characteristic pair.

There does not exist a face $F'$ containing $F$ such that $|G_{F'}|\neq 1$, by the definition of $\mathcal{L}$. Thus we can assume $0<|c_j|<1$ and $c_j\in \QQ$ for every $j\in\{1,2,\dots,k\}$ by Lemma \ref{lem_non_zero_int_pt}. 
Let  $$V(F):=\{b_1,b_2,\dots,b_{\ell}\} \text{ and } V(P)=\{b_1,b_2,\dots,b_{\ell},b_{\ell+1},\dots,b_m\}.$$ Note that there is a homeomorphism from $\widebar{F}_{r+1}$ to $F\times \Delta^{k-1}$ preserving the face structure. Let $V(\Delta^{k-1})=\{u_1,u_2,\dots,u_k\}$. We have $P^{(1)}\subset P$ from Definition \ref{def_qtoric_orb}. 
Then $$V(P^{(1)}):=\{(b_p,u_s) ~\big{|}~ 1\leq p\leq \ell; 1\leq s\leq k\}\cup\{b_{\ell+1},\dots,b_m\}.$$ 
We can assume $(b_p,u_s)$ is not a vertex of $\widebar{F}_{i_s}$ for $1\leq s\leq k$. For a vertex $b_p\in V(F) \subset P$, let $b_p= (\bigcap_{j=k+1}^{n}F_{i_j}) \bigcap F = \bigcap_{j=1}^{n}F_{i_j}$. Then for all $1\leq s\leq k$ and  $1\leq p\leq \ell$ $$(b_p,u_s)=\bigcap_{j=1~ j\neq s}^{n}\widebar{F}_{i_j}\bigcap\widebar{F}_{r+1}.$$

Now we calculate the singularity over each vertex in $P^{(1)}$. Note that
\begin{gather*}
{|G_{(b_p,u_s)}(P^{(1)},\lambda^{(1)})|=|\det[\lambda_{i_1},\dots,\widehat{\lambda_{i_s}},\dots,\lambda_{i_n},\text{prim}(\sum_{j=1}^{k} c_j \lambda_{i_j})]|=\frac{|c_j|}{d_F}|G_{b_p}(P,\lambda)|<|G_{b_p}(P,\lambda)| 
}
\end{gather*}
for  $1\leq p\leq \ell$ and  $1\leq s\leq k$ and $d_F$ is a positive integer satisfying $\lambda_F=d_F.\text{prim}(\lambda_F)$, and $$|G_{b_{\ell+i}}(P^{(1)},\lambda^{(1)})|=|G_{b_{\ell+i}}(P,\lambda)|  \text{ for } 1\leq i\leq m-\ell. $$ Therefore, in this above process if a vertex of $P$ is not in the face $F$ then the corresponding singularity remains same. Also corresponding to every vertex $b$ in $F$ we get exactly $k$ many new vertices in $\widebar{F}_{r+1}$ such that at
each of them the singularity is strictly less than the singularity on $b\in F\subset P$ in the given orbifold.

If $|G_F|=1$ for every faces of $P^{(1)}$ then $M(P^{(1)},\lambda^{(1)})$  is the desired resolution of singularities of $M(P,\lambda)$. Otherwise 
we repeat the above process on $M(P^{(1)},\lambda^{(1)})$ to obtain an $\mathcal{R}$-characteristic pair $(P^{(2)},\lambda^{(2)})$ where $P^{(2)}$ is a blowup of $P^{(1)}$, and $\lambda^{(2)}$ is defined similarly as in \eqref{eq_lambda_blowup} from $\lambda^{(1)}$. If $|G_F|=1$ for every faces of $P^{(2)}$ corresponding to the pair $(P^{(2)},\lambda^{(2)})$ then we are done. Otherwise, we repeat the process. Since the order of $G_F$ is finite for any face $F$ of $P$, this inductive process ends after a finite steps.
\end{proof}

\begin{figure}
    \begin{center}
        \begin{tikzpicture}[scale=0.8]
        \draw (0,0)--(1,-1)--(2,0)--(2,4)--(0,4)--(0,0);
        \draw (0,4)--(1,3)--(2,4);
        \draw[thick, blue] (1,3)--(1,-1);
        \draw [dashed] (0,0)--(2,0);
        \draw [<-] (1,.7)--(3,.7);
         \draw [<-] (1.1,3.5)--(3,3.5);
        \node at (3.8,3.5) {$(0,0,1)$};
        \node at (1,3) {$\bullet$};
        \draw[->] (1.5,-.2)--(1.5,-1.5); 
\node[below] at (1.5, -1.5) {\footnotesize$(0,0,1)$};
        \node at (3.2,.7) {$F$};
        \node at (1.3,2.9) {$b$};
        \draw [<-] (1.5,2)--(3,2);
        \node at (3.8,2) {$(1,2,0)$};
        \draw [<-] (.5,1.2)--(-1,1.2);
        \node at (-1.8,1.2) {$(1,0,0)$};
        
       \draw[thick, dotted] (.7,2.7)--(0,2.7); \draw[->](0,2.7)--(-1,2.7);
        \node at (-1.8,2.7) {$(0,1,0)$};
        
        \node at (1,-2.5) {(A)};

        \begin{scope}[xshift=250]
        \draw [->] (0,1.5)--(0,4);
        \draw [->] (1.5,0)--(4,0);
        \draw [->] (0,0)--(-1.5,-1.5);
        \node at (1.5,1.5) {$\bullet$};
        \draw[->,thick, red] (0,0)--(1.5,0);
        \draw[->,thick, green] (0,0)--(0,1.5);
        \draw [dashed] (1.5,0)--(1.5,1.5)--(0,1.5);
        \draw[->,thick, blue] (0,0)--(1.5,3);
        \draw[dashed] (1.5,3)--(3,3)--(1.5,0);
        \node[above] at (1.5,-.8) {$(1,0,0)$};
        \node[left] at (0,1.5) {$(0,1,0)$};
        \node[above] at (1.5,3) {$(1,2,0)$};
        \node[right] at (1.5,1.5) {$(1,1,0)$};
        \node at (2,-1.5) {(B)}; 
        \end{scope}

\begin{scope}[yshift=-200,xshift=100]
\draw[dashed, thick] (0,0)--(1,1)--(4,1); \draw[dashed, thick] (1,1)--(1,4);

\draw[thick] (0,0)--(3,0)--(3,3)--(0,3)--cycle;
\draw[thick] (3,0)--(4,1)--(4,4)--(3,3)--cycle;
\draw[thick] (4,4)--(1,4)--(0,3)--(3,3)--cycle;

\draw[->] (1.5,3.5)--(1.5,4.5); 
\node[above] at (1.5, 4.5) {\footnotesize$(0,0,1)$};

\draw[thick, dotted] (1.5,0.5)--(1.5,0); \draw[->] (1.5, 0)--(1.5, -0.5);
\node[below] at (1.5, -0.5) {\footnotesize$(0,0,1)$};

\draw[thick, dotted] (0.5,2)--(0,2); \draw[->] (0,2)--(-0.5, 2);
\node[left] at (-.5, 2) {\footnotesize$(1,0,0)$};

\draw[->] (3.5,2)--(4.5,2); 
\node[right] at (4.5, 2) {\footnotesize$(1,2,0)$};

\draw[->] (0.5,1.5)--(-.5,0.5); 
\node[left] at (-.5, 0.5) {\footnotesize$(1,1,0)$};

\draw[thick, dotted] (3.5,2.5)--(4,3); \draw[->] (4,3)--(4.5, 3.5);
\node[right] at (4.5, 3.5) {\footnotesize$(0,1,0)$};

\node at (2,-1.5) {(C)};
\end{scope}

        \end{tikzpicture}
    \end{center}
\caption{}
    \label{Fig_Example of GF}
\end{figure}

\begin{example}
Consider the $\mathcal{R}$-characteristic function and the edge $F$ of the triangular prism $P_1$ in Figure  \ref{Fig_Example of GF}(A). We have the $\ZZ$-module generated by $\{(1,2,0),(1,0,0)\}$. Then $$(\big<(1,2,0),(1,0,0)\big>\bigotimes_{\ZZ}\RR)\cap \ZZ^{3}=\{(x,y,0) ~~|~~ x,y\in \ZZ\} \cong \ZZ^2$$
which has a basis $\{(1,0,0),(0,1,0)\}$. In this case, $\mbox{vol}(C_F)=2$.
Thus for this edge $F$, we get $|G_F|=|\ZZ^2/\big<(1,2,0),(1,0,0)\big>| = 2$. Since the faces containing $F$ are facets of $P_1$, by Lemma \ref{lem_non_zero_int_pt} there exists a non-zero $\lambda_F \in \mathring{C_F} \cap \ZZ^3$. Here, the only non-zero lattice point in the parallelepiped $C_F$ is $\lambda_F = (1,1,0)$ which can be represented as  $$(1,1,0)=\frac{1}{2}(1,0,0)+\frac{1}{2}(1,2,0).$$

Also, similarly, one can calculate $|G_b|=2$ for $b \in V(F)$ and $|G_b|=1$ for $b \in V(P_1) - V(F)$. So, the maximal element in $\mathcal{L}$ for this case is $F$. Thus we blowup $P_1$ along the face $F$, and get the cube $P^{(1)}_1$ as in Figure \ref{Fig_Example of GF}(C). One can define $\lambda^{(1)}$ on $P^{(1)}_1$ as in \eqref{eq_lambda_blowup}. 
 Then in the toric orbifold $M(P^{(1)}_1,\lambda^{(1)})$, we have $|G_E|=1$ for every face $E$ of $P^{(1)}_1$. Thus $M(P^{(1)},\lambda^{(1)})$ is a toric manifold, and it is a resolution of singularities of $M(P_1,\lambda)$. \qed
\end{example}

\section{Equivariant cobordism of quasi-contact toric manifolds}\label{sec_tctm}
In this section, we recall the concept of quasi-contact toric manifolds and discuss some of their properties. Then we prove that any quasi-contact toric manifold is equivariantly the boundary of an oriented smooth manifold. In particular, good contact toric manifolds and generalized lens spaces are equivariantly cobordant to zero.

Consider the action  
$\alpha \colon T^{n+1}\times \CC^{n+1} \to \CC^{n+1}$ of $(n+1)$-dimensional torus $T^{n+1}$ on $\CC^{n+1}$ defined by
$$\alpha((t_1, \ldots, t_n, t_{n+1}), (z_1, \ldots, z_n, z_{n+1}))= 
(t_1z_1, \ldots, t_nz_n, t_{n+1}z_{n+1}).$$ 
Then $\CC^n \times S^1$
is a $T^{n+1}$-invariant subset of $\CC^{n+1}$, and the orbit space $(\CC^{n}\times S^1)/T^{n+1}$ is $(\RR_{\geq 0})^n$. The restriction $\alpha|_{T^{n+1}\times (\CC^{n}\times S^1)}$ is called the \emph{standard} $T^{n+1}$-action on $\CC^{n}\times S^1$.

\begin{definition}\label{def:axiom_top_cont}
A $(2n+1)$-dimensional smooth manifold $N$ with an effective $T^{n+1}$-action is called a quasi-contact toric manifold if 
the orbit space ${N}/{T^{n+1}}$ has the structure of an $n$-dimensional simple polytope and, for each point $p\in N$, there is
\begin{enumerate}
\item an automorphism $\theta\in \Aut(T^{n+1})$,
\item a $T^{n+1}$-invariant neighbourhood $U$ of $p$ in $N$ and a $T^{n+1}$-invariant open subset $U'$ of $\CC^n \times S^1$ such that there is a $\theta$-equivariantly diffeomorphism $f_{\theta}:U\to U'$ (that is, $f_{\theta}(tx)=\theta(t)f_{\theta}(x)$ for $(t,x)\in T^{n+1}\times U$).
\end{enumerate}
\end{definition}

 Note that $(2n+1)$-dimensional quasi-contact toric manifolds are locally $1$-standard $T$-manifolds of \cite{SaSo}.
Let 
$$\mathfrak{q} \colon N \to Q $$ 
be the orbit map where $Q$
is an $n$-dimensional simple polytope. Let $\mathcal{F}(Q)\colonequals \{E_1, \ldots, E_{\ell}\}$ be the set of facets of $Q$. Then each $N_{j} \colonequals \mathfrak{q}^{-1}(E_j)$ is a $(2n-1)$-dimensional $T^{n+1}$-invariant submanifold of $N$.
Then, the isotropy subgroup of $N_{j}$ is a circle subgroup $T_{j}$ of
$T^{n+1}$. The group $T_{j}$ is uniquely determined by a primitive vector
$\eta_j \in \ZZ^{n+1}$ for $j=1,2,\dots,\ell$; that is, we get a natural function
\begin{equation}\label{eq_axiomatic_lambda}
 \eta \colon \{E_1, \ldots, E_{\ell}\} \to \ZZ^{n+1}   
\end{equation}
defined by $\eta(E_j) = \eta_j$.

 We discuss the constructive definition of $(2n+1)$-dimensional quasi-contact toric manifolds on simple polytopes following \cite{SaSo}. Let $\mathcal{F}(Q)\colonequals \{E_1, \dots, E_\ell\}$ be the
set of facets of an $n$-dimensional simple polytope $Q$.

\begin{definition}\label{def_hyp_char_map}
A function $\xi \colon \mathcal{F}(Q) \to \ZZ^{n+1} $ is called a \emph{hyper characteristic function} if 
 $\big<\xi_{j_1}, \dots, \xi_{j_n}\big> \text{ is a rank $n$ unimodular submodule of } \ZZ^{n+1}$ whenever $E_{j_1}\cap\cdots\cap E_{j_n}\neq \emptyset$
 where $ \xi_j \colonequals \xi(E_j)$ for $j=1, \ldots, \ell$.
The pair $(Q, \xi)$ is called a \emph{hyper characteristic pair.}
\end{definition}

 Note that hyper characteristic function was defined on simplices in \cite[Definition 2.1]{SS} and Definition \ref{def_hyp_char_map} is the case $k=1$ in \cite[Definition 2.2]{SaSo}.
 
 Let $\xi$ be a hyper characteristic function on an $n$-dimensional simple polytope $Q$. For a point $p\in Q$, let $E_{j_1} \cap \cdots \cap E_{j_k}$ be the face of $Q$ containing $p$ in its relative interior. Then
 $\exp(\big< \xi_{j_1}, \dots,  \xi_{j_k}\big> \otimes_{\ZZ} \RR)$ is a $k$-dimensional subtorus of  $T^{n+1}$. We denote this subgroup by $T_p$. If $p$ belongs to the relative interior of $Q$, we denote $T_p=1$, the identity in $T^{n+1}$. We define the following identification space.
\begin{equation}\label{eq_constr_DJ}
N(Q, \xi)\colonequals  (T^{n+1}\times Q)/{\sim'}
\end{equation}
where 
\begin{equation*}
(t, p)\sim' (s,q) ~~ \mbox{if and only if} ~~ p=q ~~ \mbox{and} ~~ t^{-1}s \in T_p. 
\end{equation*}
Here, $T^{n+1}$ acts on $N(Q, \xi)$ induced by the multiplication on the first factor
of $T^{n+1} \times Q$.

\begin{proposition}\label{prop_two_definitions_are_equiv}
Let $(Q, \xi)$ be a hyper characteristic pair. Then the space $N(Q, \xi)$ in \eqref{eq_constr_DJ} is a quasi-contact toric manifold. 
\end{proposition}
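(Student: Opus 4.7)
The plan is to verify each axiom of Definition~\ref{def:axiom_top_cont} for $N(Q,\xi)$, following the template of the Davis–Januszkiewicz construction \cite{DJ} and its orbifold version \cite[Proposition 2.9]{PS}. The projection $T^{n+1}\times Q\to Q$ descends to a continuous map $\mathfrak{q}\colon N(Q,\xi)\to Q$, and since the equivalence relation $\sim'$ is preserved by left multiplication on the first factor, the $T^{n+1}$-action on $T^{n+1}\times Q$ descends to a continuous $T^{n+1}$-action on $N(Q,\xi)$ with $\mathfrak{q}$ as its orbit map. This takes care of the simple polytope orbit space condition.

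For local standardness, fix a point $p\in N(Q,\xi)$, set $q=\mathfrak{q}(p)$, and let $F=E_{j_1}\cap\cdots\cap E_{j_k}$ be the unique face of $Q$ containing $q$ in its relative interior. Pick a vertex $v$ of $F$ and, after reindexing, write $v=E_{j_1}\cap\cdots\cap E_{j_n}$. By Definition~\ref{def_hyp_char_map}, $\{\xi_{j_1},\ldots,\xi_{j_n}\}$ generates a rank-$n$ unimodular sublattice of $\ZZ^{n+1}$, so it extends to a $\ZZ$-basis $\{\xi_{j_1},\ldots,\xi_{j_n},\eta\}$ of $\ZZ^{n+1}$; the change of basis determines an automorphism $\theta_v\in\Aut(T^{n+1})$ sending the standard basis to $(\xi_{j_1},\ldots,\xi_{j_n},\eta)$. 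Next, choose an open neighborhood $U_v$ of $v$ in $Q$ which, as a manifold with corners, is affinely identified with an open cone in $(\RR_{\geq 0})^n$ sending $v\mapsto 0$ and $U_v\cap E_{j_i}$ to the $i$-th coordinate hyperplane. The polar-coordinate map $\CC^n\times S^1\to (\RR_{\geq 0})^n\times S^1$ is exactly the orbit map of the standard $T^{n+1}$-action, and unwinding the definition \eqref{eq_constr_DJ} then yields a $\theta_v$-equivariant homeomorphism from $\mathfrak{q}^{-1}(U_v)$ onto a $T^{n+1}$-invariant open subset of $\CC^n\times S^1$, providing the required local model around $p$.

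Finally, I would promote these vertex charts to a smooth atlas and verify effectiveness. Effectiveness is immediate because, at any vertex $v$, the vectors $\xi_{j_1},\ldots,\xi_{j_n},\eta$ form a $\ZZ$-basis of $\ZZ^{n+1}$, so the subtorus generated by the isotropy groups of the facet preimages equals $T^{n+1}$. The main obstacle, as in the toric manifold and toric orbifold cases, is showing that the vertex charts are smoothly compatible on overlaps: if a face $F=E_{i_1}\cap\cdots\cap E_{i_k}$ lies in the stars of two vertices $v,v'$, one must show that the two basis extensions of $\{\xi_{i_1},\ldots,\xi_{i_k}\}$ differ by a change that preserves the isotropy subtorus $T_F$, so that the transition map over the overlap is given by left multiplication by a smooth $T^{n+1}/T_F$-valued section. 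This is precisely the compatibility verification used in \cite[Proposition~2.9]{PS}, with the extra $S^1$-direction entering only as a free factor; I would import that argument rather than reconstruct it from scratch.
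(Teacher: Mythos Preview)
Your proof is correct in outline, but it proceeds quite differently from the paper's argument. You verify the axioms of Definition~\ref{def:axiom_top_cont} directly by building explicit vertex charts in the Davis--Januszkiewicz style: extend $\{\xi_{j_1},\dots,\xi_{j_n}\}$ to a $\ZZ$-basis of $\ZZ^{n+1}$ using unimodularity, use the resulting automorphism of $T^{n+1}$ to straighten the local model to $\CC^n\times S^1$, and then check smooth compatibility on overlaps by importing the argument of \cite[Proposition~2.9]{PS}. The paper instead obtains the smooth structure globally by realizing $N(Q,\xi)$ as a quotient of the moment angle manifold $Z_Q$: it cites \cite[Proposition~2.3]{SaSo} for the local standardness, and then splits on the rank of $\big<\xi_1,\dots,\xi_\ell\big>$ to identify $N(Q,\xi)$ either with $M(Q,\lambda_\xi)\times S^1$ or with $Z_Q/T_\xi$ for a suitable subtorus $T_\xi\subset T^m$, inheriting smoothness from $Z_Q$.

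Your route is more self-contained and makes the local model visible, at the cost of having to argue smooth compatibility of the vertex charts; the paper's route avoids any overlap analysis by appealing to the already-established smooth structure on $Z_Q$ and its quotients, but relies on several results from \cite{SaSo} and \cite{BP-book}. One small slip: the map $\CC^n\times S^1\to(\RR_{\geq 0})^n\times S^1$ you describe is not the orbit map of the standard $T^{n+1}$-action (that orbit map lands in $(\RR_{\geq 0})^n$, since the last $S^1$ acts freely on the $S^1$-factor); what you want is the composite with the further projection, or equivalently the identification $(T^{n+1}\times U_v)/{\sim'}\cong \CC^n\times S^1$ obtained by writing $T^{n+1}\cong T^n\times S^1$ via $\theta_v$. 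This does not affect the substance of your argument.
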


\begin{proof}
Let $Z_Q$ be the moment angle manifold corresponding to $Q$, see \cite[Section 6.2]{BP-book}. Then $Z_Q$ is a smooth manifold and there is a smooth $T^m$-action on $Z_Q$ where $m$ is the number of facets of $Q$, see \cite[Corollary 6.2.5]{BP-book}. The space $N(Q, \xi)$ has a manifold structure which satisfy condition (1) and (2) in Definition \ref{def:axiom_top_cont}, see \cite[Proposition 2.3]{SaSo}. If rank of $\big<\xi_1, \ldots, \xi_{\ell}\big>$ is $n$, then $N(Q, \xi)$ is equvariantly homeomorphic to $M(Q, \lambda_{\xi}) \times S^1$ for some toric manifold $M(Q, \lambda_{\xi})$, see \cite[Proposition 2.6]{SaSo}.  If rank of $\big<\xi_1, \ldots, \xi_{\ell}\big>$ is $n+1$, then $N(Q, \xi)$ is equvariantly homeomorphic to $Z_Q/T_{\xi}$ for some $(m-n-1)$-dimensional subgruop $T_{\xi}$ of $T^m$, see \cite[Proposition 2.7]{SaSo}. Also, $M(Q, \lambda_{\xi})$ is equivariantly homeomorphic to $Z_Q/T_{\lambda_{\xi}}$ for some  some $(m-n)$-dimensional subgruop $T_{\lambda_{\xi}}$ of $T^m$. Therfore, $N(Q, \xi)$ has a smooth structure such that the $T^{n+1}$-action $N(Q, \xi)$ is smooth. 
\end{proof}

Note that the function $\eta$ defined in \eqref{eq_axiomatic_lambda} satisfies Definition  \ref{def_hyp_char_map}, see the explanation in \cite[Subsection 2.1]{SaSo}. Also the orientation of a quasi-contact toric manifold $N$ can be induced from an orientation of $Q$ and $T^{n+1}$.

\begin{proposition}\cite{SaSo}\label{cor_two_def_are_equiv}
Let $N$ be a quasi-contact toric manifold over the $n$-dimensional simple polytope $Q$, and $\eta$ a function as defined in \eqref{eq_axiomatic_lambda}. Then $N$ is equivariantly diffeomorphic to $N(Q, \eta)$. 
\end{proposition}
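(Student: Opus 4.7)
The plan is to exhibit an explicit $T^{n+1}$-equivariant diffeomorphism between $N(Q,\eta)$ and $N$ by constructing a suitable section of the orbit map $\mathfrak{q}:N\to Q$. First I would observe that, by Proposition \ref{prop_two_definitions_are_equiv}, the identification space $N(Q,\eta)$ is itself a quasi-contact toric manifold over $Q$, and from the construction of the equivalence relation $\sim'$ in \eqref{eq_constr_DJ} one checks that the isotropy function attached to $N(Q,\eta)$ via \eqref{eq_axiomatic_lambda} is precisely $\eta$. Thus $N$ and $N(Q,\eta)$ have the same orbit space and the same characteristic data, so the comparison reduces to producing an equivariant map that intertwines the two orbit projections.

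Next I would build a continuous section $s:Q\to N$ of $\mathfrak{q}$ whose image has the correct isotropy along each stratum: if $p$ lies in the relative interior of a face $F=E_{j_1}\cap\cdots\cap E_{j_k}$, then the stabilizer of $s(p)$ in $T^{n+1}$ should be exactly the subtorus generated by $\exp(\RR\eta_{j_1}),\ldots,\exp(\RR\eta_{j_k})$. Locally this is easy: the model in Definition \ref{def:axiom_top_cont} gives, around any point, an equivariant identification of a neighborhood in $N$ with an invariant open subset of $\CC^n\times S^1$, and in this model the slice $(\RR_{\geq 0})^n\times\{1\}$ is an honest section with the desired isotropy structure. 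Globally, $Q$ is a contractible simple polytope, and I would glue the local sections by an induction on the face lattice of $Q$, from the vertices upward, using convexity of each face and the fact that any two local sections over a common open set differ by a continuous map into $T^{n+1}$, which can be adjusted since the relevant obstructions vanish.

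Once $s$ is in hand, define $\Phi:T^{n+1}\times Q\to N$ by $\Phi(t,p)=t\cdot s(p)$. If $(t,p)\sim'(u,p)$ then $t^{-1}u\in T_p$, and $T_p$ fixes $s(p)$ by the construction of $s$, so $\Phi(t,p)=\Phi(u,p)$ and $\Phi$ descends to a continuous $T^{n+1}$-equivariant map $\tilde\Phi:N(Q,\eta)\to N$. Over each $p\in Q$, the fibers of both $\mathfrak{q}$ and the projection $N(Q,\eta)\to Q$ are the single orbit $T^{n+1}/T_p$, and $\tilde\Phi$ restricts to a bijection there, hence is a homeomorphism by standard slice arguments. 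For the diffeomorphism property, I would pass to the local models on both sides: near any point, $\tilde\Phi$ is, after possibly composing with an automorphism of $T^{n+1}$, the identity on an invariant open subset of $\CC^n\times S^1$.

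The main obstacle is the second paragraph: producing a continuous section whose isotropy matches $\eta$ on every face, as one must stitch the local sections coming from distinct standard charts across face boundaries without disturbing the prescribed stabilizers. I expect this to go through by induction on the dimension of faces, using contractibility of faces of a simple polytope and the fact that the characteristic data $\eta$ already dictates the required transition automorphisms between the local models supplied by Definition \ref{def:axiom_top_cont}.
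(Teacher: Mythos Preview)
Your proposal is correct and is essentially the same approach the paper takes: the paper's proof simply defers to the arguments of Lemma~1.4 and Proposition~1.8 in \cite{DJ} together with Proposition~\ref{prop_two_definitions_are_equiv}, and what you have written is precisely a spelled-out version of those Davis--Januszkiewicz arguments (build a section of the orbit map by induction over the face lattice, then use it to define the equivariant map $[t,p]\mapsto t\cdot s(p)$). Your identification of the section-gluing step as the main point is exactly right, and it is handled in \cite{DJ} just as you outline.
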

\begin{proof}
    This follows by similar arguments in the proofs of Lemma 1.4 and Proposition 1.8 in \cite{DJ} and Proposition \ref{prop_two_definitions_are_equiv}.
\end{proof}

\begin{example}[Generalized lens spaces]\label{ex_gen_lens_sp}
Let $\Delta^n$ be the $n$-dimensional simplex and $\xi$ a hyper characteristic function on it. We assume that the rank of $\big<\{\xi(E) \mid E\in \mathcal{F}(\Delta^n)\}\big> \subseteq \ZZ^{n+1}$ is $(n+1)$.
The article \cite{SS} shows that $N(\Delta^n,\xi)$ is equivariantly homoeomorphic (hence diffeomorphic) to the orbit space $S^{2n+1}/ G_ \xi$ for some free action of a finite group $G_{\xi}$. This space is called a \emph{generalized lens space} in \cite{SS}.  
In particular, if $\{\xi(E) \mid E\in \mathcal{F}(\Delta^n)\}$ forms a basis of $\ZZ^{n+1}$, then $N(\Delta^n, \xi)$ is homeomorphic to $S^{2n+1}$.

Consider an integer $p>1$ and $n$ integers $q_1,\dots,q_n$ such that $\gcd\{p,q_i\}=1$ for all $i=1,\dots,n$. Then $\ZZ_{p}$ acts freely on $S^{2n+1}$ by the following $$g(z_0,z_1,\dots,z_n)=(gz_0,g^{q_1}z_1,\dots,g^{q_n}z_n).$$ 
The lens space $L(p; q_1,\dots,q_n)$ is defined to be the orbit space $S^{2n+1}/\ZZ_p$. The paper \cite{SS} showed
that there is a hyper characteristic function $\xi$ on $\Delta^n$ such that $L(p; q_1,\dots,q_n)$ is equivariantly diffeomorphic to $N(\Delta^n, \xi)$.\qed 
\end{example}

\begin{example}[Good contact toric manifolds]\label{ex_gd_ct_toric_mfd}
Luo \cite[Chapter 2]{Luo-Thesis} discussed the construction of good contact toric manifolds which are compact connected contact toric manifolds studied in \cite[Section 2]{Lerman}. We briefly, recall the construction. Let $Q$ be an $n$-dimensional simple lattice polytope embedded in $\RR^{n+1}\setminus \{\mathbf{0}\}$. Consider the cone $C(Q)$ on $Q$ with apex $\mathbf{0}\in \mathbb{R}^{n+1}$ and  the set $\{\widetilde{E} \mid E\in \mathcal{F}(Q)\}$ of facets of $C(Q)$ where $\widetilde{E}\colonequals C(E)\setminus \{0\}$. Let $\xi(E)$ be the primitive outward normal vector on $\widetilde{E}$. This defines a function $\xi \colon \mathcal{F}(Q) \to \mathbb{Z}^{n+1}$. Since the facets of $C(Q)-\{0\}$ intersects transversely, the function $\xi$ satisfies Definition \ref{def_hyp_char_map}. Then the space $N(Q, \xi)$ is $T^{n+1}$-equivariantly homeomorphic to a good contact toric manifold whose moment cone is  $C(Q)$. Moreover, a good contact toric manifold can be obtained in this way. For details, we refer \cite{Lerman} and \cite{Luo-Thesis}. 
\qed
\end{example}

 \begin{lemma}\label{lem_out_pt}
 Let $Q$ be an $n$-dimensional simple polytope and $$\xi \colon  \mathcal{F}(Q) \to \ZZ^{n+1}$$ a hyper characteristic function. Then there exists ${\bf a}=(a_1,\dots,a_{n+1})\in \ZZ^{n+1}$ such that
 $\{{\bf a},\xi(E_{j_1}), \dots, \xi(E_{j_n})\}$  is a linearly independent subset in $\ZZ^{n+1}$ whenever $E_{j_1}\cap\cdots\cap E_{j_n}$ is a vertex of $Q$.
 \end{lemma}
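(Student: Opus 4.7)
The plan is to translate the desired linear-independence condition into an avoidance condition on finitely many hyperplanes, and then find an integer point in the complement. For each vertex $v = E_{j_1}\cap\cdots\cap E_{j_n}$ of $Q$, Definition \ref{def_hyp_char_map} guarantees that $\{\xi(E_{j_1}),\dots,\xi(E_{j_n})\}$ spans a rank $n$ unimodular submodule of $\ZZ^{n+1}$, hence an $n$-dimensional rational hyperplane $H_v\subset \RR^{n+1}$. The set $\{{\bf a},\xi(E_{j_1}),\dots,\xi(E_{j_n})\}$ is $\ZZ$-linearly independent (equivalently $\RR$-linearly independent) if and only if ${\bf a}\notin H_v$. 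So our task becomes the production of a lattice point ${\bf a}\in \ZZ^{n+1}$ lying outside $\bigcup_{v\in V(Q)} H_v$.

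First I would note that since $Q$ is simple and combinatorially a polytope, it has only finitely many vertices, so this union is finite. Next, because each $H_v$ is rational, there is a nonzero primitive normal ${\bf n}_v\in\ZZ^{n+1}$ with $H_v=\{{\bf x}\in\RR^{n+1}:\langle {\bf x},{\bf n}_v\rangle=0\}$, and the desired condition becomes $\langle {\bf a},{\bf n}_v\rangle\neq 0$ for every vertex $v$ of $Q$.

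The main (mild) obstacle is to guarantee a \emph{lattice} witness, not just a real one. I would handle this by forming the polynomial
\[
P(x_1,\dots,x_{n+1}) \;:=\; \prod_{v\in V(Q)} \langle (x_1,\dots,x_{n+1}),{\bf n}_v\rangle,
\]
which is a nonzero polynomial in $\RR[x_1,\dots,x_{n+1}]$ because each linear factor is nonzero and $\RR[x_1,\dots,x_{n+1}]$ is an integral domain. A standard induction on the number of variables shows that a nonzero polynomial in $\RR[x_1,\dots,x_{n+1}]$ cannot vanish on all of $\ZZ^{n+1}$ (for a single variable a nonzero polynomial has finitely many roots; in general, write $P = \sum_i P_i(x_2,\dots,x_{n+1})\, x_1^i$, apply the inductive hypothesis to pick $(a_2,\dots,a_{n+1})\in\ZZ^n$ with some $P_i(a_2,\dots,a_{n+1})\neq 0$, then pick $a_1\in\ZZ$ so that the resulting one-variable polynomial does not vanish). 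Hence there exists ${\bf a}=(a_1,\dots,a_{n+1})\in \ZZ^{n+1}$ with $P({\bf a})\neq 0$, which means $\langle {\bf a},{\bf n}_v\rangle\neq 0$ for all $v$, i.e., ${\bf a}\notin H_v$ for every vertex $v$. This ${\bf a}$ satisfies the required linear independence at every vertex, completing the argument.
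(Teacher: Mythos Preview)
Your proof is correct and follows essentially the same approach as the paper: both reduce the problem to finding a lattice point in $\ZZ^{n+1}$ outside the finite union of rank-$n$ subspaces $H_v$ (the paper's $V_b$) associated to the vertices of $Q$. Your polynomial-nonvanishing argument for producing such a lattice point is a clean and explicit alternative to the paper's ``finite union of proper subspaces cannot cover $\RR^{n+1}$'' argument, but the overall strategy is the same.
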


  \begin{proof}
  Let $b$ be a vertex of $Q$. Then $b=E_{j_1}\cap\cdots\cap E_{j_n}$ for some unique facets $E_{j_1},\cdots, E_{j_n}$.
  Let $\ZZ_b$ be the submodule of $\ZZ^{n+1}$ generated by $\xi_{j_1}, \dots, \xi_{j_n}$. So the rank of $\ZZ_b$ is $n$ for any vertex $b\in V(Q)$. Since there are only finitely many vertices in $Q$ we have $\ZZ^{n+1}\setminus \bigcup_{b\in V(Q)}\ZZ_b$ is non-empty. If possible, let 
   \begin{equation}\label{eq_ten}
   \ZZ^{n+1}=\bigcup_{b\in V(Q)}\ZZ_b.
   \end{equation}
   Let $V_b:=\ZZ_b\otimes_{\ZZ}\RR$. Then $V_b$ is an $n$-dimensional linear subspace of $\RR^{n+1}$. From \eqref{eq_ten} it follows that
  $$\RR^{n+1}=\ZZ^{n+1}\otimes_{\ZZ} \RR =\bigcup_{b\in V(Q)}(\ZZ_b\otimes_{\ZZ} \RR) =\bigcup_{b\in V(Q)}V_b,$$ which is a contradiction as $V(Q)$ is a finite set.
  Let ${\bf a}\in \ZZ^{n+1}\setminus \bigcup_{b\in V(Q)}\ZZ_b$ be primitive. Then ${\bf a}$ is the desired vector of the lemma.
  \end{proof}

\begin{theorem}\label{thm_zoro_cob}
Let $N$ be a $(2n+1)$-dimensional quasi-contact toric manifold. Then there is a smooth oriented $T^{n+1}$-manifold with boundary $M$ such that $\partial{M}$ is equivariantly diffeomorphic to $N$.
\end{theorem}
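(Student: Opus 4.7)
The plan is to realize $M$ as a (desingularized) toric orbifold with boundary over the prism $P = Q\times [0,1]$. By Proposition~\ref{cor_two_def_are_equiv} I will replace $N$ with $N(Q,\eta)$ for some hyper characteristic pair $(Q,\eta)$, with facets $\mathcal{F}(Q) = \{E_1,\ldots,E_\ell\}$. Lemma~\ref{lem_out_pt} supplies a primitive vector $\mathbf{a}\in\ZZ^{n+1}$ such that $\{\mathbf{a},\eta_{j_1},\ldots,\eta_{j_n}\}$ is $\RR$-linearly independent at every vertex $b = E_{j_1}\cap\cdots\cap E_{j_n}$ of $Q$.

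Next, let $P = Q\times [0,1]$, an $(n+1)$-dimensional simple polytope with facets $\widetilde{E}_j = E_j\times [0,1]$ for $j=1,\ldots,\ell$, together with $Q_0 = Q\times\{0\}$ and $Q_1 = Q\times\{1\}$. Define $\tilde\eta$ on $\mathcal{F}(P)\setminus\{Q_1\}$ by $\tilde\eta(\widetilde{E}_j) = \eta_j$ and $\tilde\eta(Q_0) = \mathbf{a}$, and form the identification space
$$M \colonequals (T^{n+1}\times P)/\sim,$$
where $(t,x)\sim (s,x')$ iff $x=x'$ and $t^{-1}s$ lies in the subtorus generated by $\{\exp(\RR\,\tilde\eta(F))\}$ over all facets $F$ containing $x$ with $F\neq Q_1$. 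At a point $(p,1)$ with $p$ in the relative interior of $E_{j_1}\cap\cdots\cap E_{j_k}\subset Q$, only the vectors $\eta_{j_1},\ldots,\eta_{j_k}$ contribute, matching exactly the identification at $p$ inside $N(Q,\eta)$. Thickening in the $[0,1]$-coordinate produces local models of the form $\CC^k\times T^{n+1-k}\times [0,\varepsilon)$ near such points, so $M$ is a manifold with boundary in a neighborhood of $Q_1$ whose boundary is $T^{n+1}$-equivariantly diffeomorphic to $N$. At faces of $P$ disjoint from $Q_0$, the hyper characteristic condition (Definition~\ref{def_hyp_char_map}) supplies unimodular vertex bases and $M$ is smooth there.

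Orbifold singularities can arise only over $Q_0$: at a vertex $(b,0)$ of $P$ the characteristic set is $\{\eta_{j_1},\ldots,\eta_{j_n},\mathbf{a}\}$, linearly independent but generically not unimodular. I resolve these by iterating the blow-up procedure from the proof of Theorem~\ref{thm_res_sing}. A maximal singular face $F\in\mathcal{L}$ necessarily has the form $F'\times\{0\}$ for some face $F'$ of $Q$, because unimodularity on $Q$ is already in force. I then choose the separating hyperplane $H$ of Definition~\ref{def_qtoric_orb} by combining a linear functional on $\RR^n$ that separates $V(F')$ from $V(Q)\setminus V(F')$ with a sufficiently negative coefficient of the $[0,1]$-coordinate, so that $H\cap P$ avoids $Q_1$. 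The new facet is assigned the characteristic vector $\mathrm{prim}(\lambda_F)$ from Lemma~\ref{lem_non_zero_int_pt}, whose coefficients all lie in $(0,1)$ in absolute value. As in the proof of Theorem~\ref{thm_res_sing}, the vertex singularity orders over $V(F)$ strictly decrease, and since every blow-up occurs inside a collar of $Q_0$, the facet $Q_1$ and the identification above it are preserved. After finitely many steps $M$ becomes a smooth $T^{n+1}$-manifold with boundary equivariantly diffeomorphic to $N$; orientations of $Q$, of $[0,1]$, and of $T^{n+1}$ combine to orient it.

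The principal technical point will be verifying that the closed-orbifold resolution of Theorem~\ref{thm_res_sing} lifts cleanly to the orbifold-with-boundary setting: concretely, that the singular face lattice $\mathcal{L}$ lies entirely within $Q_0$, and that the blow-up hyperplanes have enough geometric freedom to be pushed into an arbitrarily small collar of $Q_0$, so that each blow-up restricts to the identity on a neighborhood of the boundary $N$.
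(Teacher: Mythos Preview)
Your argument is correct and shares the paper's core strategy: thicken $Q$ to $P=Q\times[0,1]$, use Lemma~\ref{lem_out_pt} to supply the extra vector $\mathbf{a}$, and resolve the resulting orbifold singularities with the blowup procedure of Theorem~\ref{thm_res_sing}. The one real difference is in how the boundary is produced. The paper assigns $\mathbf{a}$ to \emph{both} $Q\times\{0\}$ and $Q\times\{1\}$, obtaining a \emph{closed} toric orbifold $M(P,\lambda)$ to which Theorem~\ref{thm_res_sing} applies verbatim as a black box; it then notes that all singular faces lie in $Q\times\{0\}\cup Q\times\{1\}$, arranges the blowups so that a slab $Q\times[\tfrac12-\delta,\tfrac12]$ survives untouched inside $P^{(d)}$, and finally takes $\widetilde P=\iota^{-1}(Q\times[0,\tfrac12])$ and $M=(\pi')^{-1}(\widetilde P)$, so the boundary appears as the cut at height $\tfrac12$. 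You instead leave $Q\times\{1\}$ unassigned from the outset, build an orbifold-with-boundary directly, and resolve only the single singular end $Q\times\{0\}$. Your route is more economical (half the singular locus, no cutting step), but it forces you to carry out the adaptation of Theorem~\ref{thm_res_sing} to the boundary setting that you correctly flag as the main technical point; the paper's detour through a closed orbifold trades that adaptation for the cheap ``cut in the middle'' trick and a literal citation of Theorem~\ref{thm_res_sing}.
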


\begin{proof}
There exist a hyper characteristic pair $(Q,\xi)$ such that $N$ is equivariantly diffeomorphic to $N(Q,\xi)$ by Proposition \ref{cor_two_def_are_equiv}. Let $\mathcal{F}(Q)\colonequals \{E_1, \dots, E_\ell\}$ and $I=[0,1]$. Then $Q\times I$ is an $(n+1)$-dimensional simple polytope and $\mathcal{F}(Q\times I)\colonequals \{E_1\times I, \dots, E_\ell\times I,Q\times\{0\},Q\times\{1\}\}$. Let ${ \bf a}\in \ZZ^{n+1}$ satisfies Lemma \ref{lem_out_pt}. Define a map $\lambda \colon  \mathcal{F}(Q\times I) \to \ZZ^{n+1}$ by
	\begin{align}\label{eq_new_lmd}
		\lambda(F)=\begin{cases} \xi(E_i) \quad &\text{ if } F=E_i\times I \text{ for }i =1,2,\dots,l\\
		{\bf a} \quad &\text{ if } F=Q\times \{0\}, Q\times \{1\}. \end{cases}
	\end{align}
 Then $\lambda$ is an $\mathcal{R}$-characteristic function on $P:=Q\times I$, and $M(P, \lambda)$ is a toric orbifold.  Theorem \ref{thm_res_sing} gives a resolution of singularities 
 \begin{equation}\label{eq_res_sing}
    M(P^{(d)},\lambda^{(d)})\to \dots \to M(P^{(1)},\lambda^{(1)})\to M(P,\lambda)  
 \end{equation}
 	for $M(P,\lambda)$, where $M(P^{(d)},\lambda^{(d)})$ is a  toric manifold and the arrows represent the associated blowups.
	
	Note that for any face $E$ of $Q$, the codimension of $E$ in $Q$ is same as the codimension of the face $E\times I$ in $P$. Moreover, if $$E=\bigcap_{s=1}^{k} E_{j_s}$$ for some unique facets $ E_{j_1}, E_{j_2},\dots, E_{j_k}$ of $Q$, then $E_{j_1}\times I,E_{j_2}\times I,\dots,E_{j_k}\times I$ are facets of $P$ and $$E\times I=\bigcap_{s=1}^{k} (E_{j_s}\times I).$$ 
	
	Now $\lambda(E_{j_s}\times I)=\xi(E_{j_s})$ and  $\{\xi(E_{j_1}),\xi(E_{j_2}),\dots,\xi(E_{j_k})\}$ is a direct summand of $\ZZ^{n+1}$. Then we have $$|G_{E\times I}(P,\lambda)|=1$$ for any face $E\times I$ of $P$ where $E$ is a face of $Q$. Therefore, if  $|G_{F}(P,\lambda)|\neq 1$ for a face $F$ of $P$, then either  $F\subseteq Q\times \{0\}$ or $F\subseteq Q\times \{1\}$. Thus we have $|G_{(E\times I)\cap P^{(j)}}(P^{(j)},\lambda^{(j)})|=1$ for every $j=1, \dots, d$. Therefore, the blowups in the resolution \eqref{eq_res_sing} need to take only on the faces arising from $Q\times\{0\}$ or $Q\times \{1\}\subset P$.  
Since $P$ is a simple polytope, one can consider the necessary blowups in the resolution \eqref{eq_res_sing} such that $Q\times [\frac{1}{2}-\delta,\frac{1}{2}] \subset P^{(d)}$, for some $\delta>0$. Let 
	\begin{equation*}
	    \widetilde{P} :=\iota^{-1}(Q\times [0,1/2])\subset P^{(d)},
	   	\end{equation*}
	where $\iota$ is the inclusion map $$\iota\colon P^{(d)} \to P.$$
	Then $\widetilde{P}$ is a simple polytope. Let
	$$\pi' \colon M(P^{(d)}, \lambda^{(d)} )=(P^{(d)}\times T^{n+1})/{\sim} \longrightarrow P^{(d)}$$
		be the quotient map. We prove that $(\pi')^{-1}(\widetilde{P})$ is a $T^{n+1}$-manifold with boundary, where the boundary is $N(Q,\xi )$.
	
	Let $\alpha$ be a point in $\widetilde{P}$ and $\alpha \notin \{(x,\frac{1}{2}): x\in Q\}$. Since $M(P^{(d)}, \lambda^{(d)})$ is a smooth manifold, then there exists a neighbourhood $U_{\alpha}$ of $\alpha$ in $\widetilde{P}$ such that $U_{\alpha}$ does not intersect $ \{(x,\frac{1}{2}): x\in Q\}$. So $(\pi')^{-1}(U_{\alpha})$ is an open subset of the smooth manifold $M(P^{(d)}, \lambda^{(d)})$.
	
	Now if $\alpha=(x,\frac{1}{2})\in \widetilde{P}$ for some $x\in Q$, then consider the tubular neighbourhood $Q\times (\frac{1}{2}-\delta,\frac{1}{2}] $ of $Q\times \{\frac{1}{2}\}$ in $\widetilde{P}$. Note that $\{(x,\frac{1}{2}): x\in Q\}=Q\times \{\frac{1}{2}\}$ which is nothing but $Q$. Then 
		$$(Q\times (\frac{1}{2}-\delta,\frac{1}{2}]\times T^{n+1})/{\sim}=  \Big{(}(Q\times \{\frac{1}{2}\} \times T^{n+1})/{\sim'}\Big{)}\times (\frac{1}{2}-\delta,\frac{1}{2}].$$
	
	 The characteristic function on $P^{(d)}$ induces a hyper characteristic function on the facets of  $Q\times \{\frac{1}{2}\}$ which is same as the hyper characteristic function on the facets of $Q$. This implies that $\Big{(}(Q \times\{\frac{1}{2}\}\times T^{n+1})/{\sim'}\Big{)}$ is equivariantly diffeomorphic to $N(Q,\xi)$. Thus the manifold with boundary  $(\pi')^{-1}(\widetilde{P})$ satisfies our claim. 
	Hence a quasi-contact toric manifold is equivariantly cobordant to zero.
\end{proof}

\begin{figure}
    \begin{center}
        \begin{tikzpicture}[scale=.6]
        \begin{scope}[ yshift=80, xshift=-100]
       \draw (0,0)--(1,-2)--(3,-2)--(4,0)--(2,2)--(0,0);
       \node at (4.8,-1) {$(0,0,1)$};
       \node at (4.4,1) {$(1,1,0)$};
       \node at (-0.4,1) {$(1,1,1)$};
       \node at (-.8,-1) {$(0,1,1)$};
        \node at (2,-2.5) {$(1,0,0)$};
         \node at (2,-3.2) {(A)}; 
        \end{scope}
        
        \begin{scope}[xshift=-180, yshift=-140]
        \draw [thick, blue ](0,0)--(2,0);
        \draw (2,0)--(3,1)--(1,2)--(-1,1)--(0,0);
         \draw[fill=yellow, yellow, opacity=0.3](0,-1.3)--(2,-1.3)--(3,-0.3)--(1,0.7)--(-1,-.3)--(0,-1.3);
    \draw[dotted, thick](0,-1.3)--(2,-1.3)--(3,-0.3)--(1,0.7)--(-1,-.3)--(0,-1.3);
        \draw (-1,1)--(-1,-3)--(0,-4);
        \draw (2,-4)--(3,-3)--(3,1);
        \draw [thick, blue](0,-4)--(2,-4);
        \draw (0,-4)--(0,0);
        \draw (2,-4)--(2,0);
        \draw[dashed] (-1,-3)--(1,-2)--(3,-3);
        \draw [dashed] (1,2)--(1,-2);
        
        \draw[->] (1.5,1)--(1.7,3);
        \node[above] at (1.7,3) {$(1,2,0)$};
        
        \draw[dotted, thick] (.5,1)--(0,1.5);
        \draw[->] (0,1.5)--(-1.5,3);
        \node[left] at (-1.5,3) {$(1,1,1)$};
        
        \draw[->] (-.5,-.5)--(-2,-.5);
        \node[left] at (-2,-.5) {$(0,1,1)$};
       \node at (1,-4.6) {(B)};

        \draw[->] (.5,-2) to [out=220, in=330] (-2,-2);
        \node[left] at (-2,-2) {$(1,0,0)$};
        
        \draw[dotted, thick] (.6,-3) to [out=260, in=340] (-.4,-3.57);
        \draw[->] (-.3,-3.6)--(-2,-3.6);
        \node[left] at (-2,-3.6) {$(1,2,0)$};
        
        \draw[dotted, thick] (2.5,-.5)--(3,0);
        \draw[->] (3,0)--(4,1);
        \node[right] at (3.8,1.2) {$(1,1,0)$};
        
        \draw[->] (2.5,-1.5)--(4,-1.5);
        \node[right] at (4,-1.5) {$(0,0,1)$};
        
        \draw[->] (1.25,0)--(3,2.25);
        \node[right] at (3,2.5) {$F_1$};
        
        \draw[->] (1,-4) to [out=330, in=200] (4, -4);
        \node[right] at (4,-4) {$F_0$};
        \end{scope}
        
        \begin{scope}[xshift=120, yshift=-140]
        \draw (0,0)--(2,0)--(3,1)--(1,2)--(-1,1)--(0,0);
      \draw[dotted, thick](0,-1.3)--(2,-1.3)--(3,-0.3)--(1,0.7)--(-1,-.3)--(0,-1.3);
      \draw[fill=yellow, yellow, opacity=0.3](0,-1.3)--(2,-1.3)--(3,-0.3)--(1,0.7)--(-1,-.3)--(0,-1.3);
      
      \draw[fill=blue, blue, opacity=0.3](0,-3.3)--(2,-3.3)--(2.2,-4)--(-.2,-4)--(0,-3.3);
      
        \draw (-1,1)--(-1,-3)--(-.2,-4)--(2.2,-4)--(3,-3)--(3,1);
        \draw (2,-3.3)--(0,-3.3)--(0,0);
        \draw (2.2,-4)--(2,-3.3)--(2,0);
        \draw (0,-3.3)--(-.2,-4);
        \draw[dashed] (-1,-3)--(1,-2)--(3,-3);
        \draw [dashed] (1,2)--(1,-2);
        
        \draw[->] (1.5,1)--(1.7,3);
        \node[above] at (1.7,3) {$(1,2,0)$};
        
        \draw[dotted, thick] (.5,1)--(0,1.5);
        \draw[->] (0,1.5)--(-1.5,3);
        \node[left] at (-1.5,3) {$(1,1,1)$};
        
        \draw[->] (-.5,-.5)--(-2,-.5);
        \node[left] at (-2,-.5) {$(0,1,1)$};
        
        \draw[->] (.5,-2) to [out=220, in=330] (-2,-2);
        \node[left] at (-2,-2) {$(1,0,0)$};
        
        \draw[dotted, thick] (.6,-3) to [out=260, in=340] (-.4,-3.57);
        \draw[->] (-.3,-3.6)--(-2,-3.6);
        \node[left] at (-2,-3.6) {$(1,2,0)$};
        
        \draw[->] (1.25,0)--(3,2.25);
        \node[right] at (3,2.5) {$F_1$};
        
        \draw[dotted, thick] (2.5,-.5)--(3,0);
        \draw[->] (3,0)--(4,1);
        \node[right] at (3.8,1.2) {$(1,1,0)$};
        
        \draw[->] (2.5,-1.5)--(4,-1.5);
        \node[right] at (4,-1.5) {$(0,0,1)$};
        
        \draw[->] (1,-3.7) to [out=330, in=200] (4, -4);
        \node[right] at (4,-4) {$(1,1,0)$};
          \node at (1,-4.5) {(C)}; 
        \end{scope}

        \end{tikzpicture}
    \caption{Procedure of constructing a manifold whose boundary is a given quasi-contact toric manifold.}
    \label{Fig cobor_q_mfd}
     \end{center}
\end{figure}
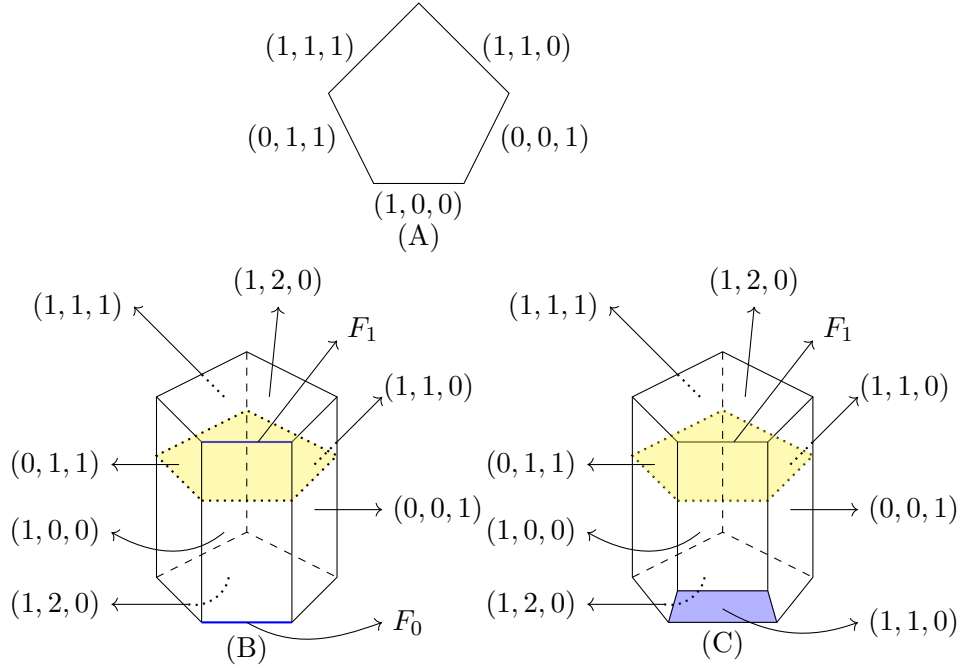  
\begin{example}
  Consider the pentagon $Q$ and the hyper characteristic function 
   $$\xi\colon \mathcal{F}(Q)\to \ZZ^3$$
   as in Figure \ref{Fig cobor_q_mfd}(A). Then we have the quasi-contact toric manifold $N(Q,\xi)$. There exists a point ${\bf a}\in \ZZ^3$ which satisfies Lemma \ref{lem_out_pt} for this pair $(Q, \xi)$. In this case one can take ${\bf a}=(1,2,0)$. Then using \eqref{eq_new_lmd} we can define an $\mathcal{R}$-characteristic function $\lambda$ on the pentagonal prism $P:=Q\times I$, see Figure \ref{Fig cobor_q_mfd}(B). Thus, we have the toric orbifold $M(P, \lambda )$. 
  
  In the simple polytope $P$, the only faces $F$ with $|G_{F}|\neq 1$ are two edges $F_0$ and $F_1$ which are coloured with blue in Figure \ref{Fig cobor_q_mfd}(B) and the $4$ vertices of these edges. Using the techniques of the proof of Theorem \ref{thm_res_sing}, first we blowup $P$ along the face $F_0$ and get the simple polytope $P'$ as in \ref{Fig cobor_q_mfd}(C). We denote the corresponding blowup of toric orbifold $M(P, \lambda )$ by $M(P',\lambda')$. Note that after this blowup the only face $F$ in the polytope $P'$ with $|G_{F}|\neq 1$ is $F_1$ and the two vertices of $F_1$, see figure \ref{Fig cobor_q_mfd}(C).
  
  Note that $Q\times \{\frac{1}{2}\} \subseteq P' \subseteq P $ which is denoted by the dotted pentagon filled with yellow in Figure \ref{Fig cobor_q_mfd}(B) and \ref{Fig cobor_q_mfd}(C). Also $Q\times \{\frac{1}{2}\}$ can be identified with $Q$. Let $\widetilde{P}$ denotes the lower portion of this pentagon $Q\times \{\frac{1}{2}\}$ in $P'$, that is $\widetilde{P} = (Q \times [0, \frac{1}{2}]) \cap P'$. Then $N(Q,\xi)$ is equivariantly the boundary of the oriented smooth manifold $(\pi')^{-1}(\widetilde{P})$ with boundary  for this example. \qed
  \end{example}

\begin{corollary}
Any good contact toric manifold is equivariantly the boundary of an oriented smooth manifold.
\end{corollary}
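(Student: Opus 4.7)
The plan is to reduce the corollary immediately to Theorem \ref{thm_zoro_cob} by exhibiting every good contact toric manifold as a quasi-contact toric manifold. The bridge has already been assembled in Example \ref{ex_gd_ct_toric_mfd}, so the argument is essentially one of bookkeeping.

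First I would take an arbitrary good contact toric manifold $N$ and recall from Example \ref{ex_gd_ct_toric_mfd} the combinatorial data associated to it: an $n$-dimensional simple lattice polytope $Q \subset \RR^{n+1}\setminus\{\mathbf{0}\}$ together with the function $\xi\colon \mathcal{F}(Q)\to \ZZ^{n+1}$ sending a facet $E$ to the primitive outward normal of the corresponding facet $\widetilde{E}=C(E)\setminus\{0\}$ of the moment cone $C(Q)$. I would then verify that $\xi$ is a hyper characteristic function in the sense of Definition \ref{def_hyp_char_map}: whenever $E_{j_1}\cap\cdots\cap E_{j_n}$ is a vertex of $Q$, the facets $\widetilde{E}_{j_1},\dots,\widetilde{E}_{j_n}$ of $C(Q)$ meet transversely along a ray, which forces $\{\xi_{j_1},\dots,\xi_{j_n}\}$ to span a rank-$n$ unimodular submodule of $\ZZ^{n+1}$. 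This is recorded in Example \ref{ex_gd_ct_toric_mfd} and is the only non-formal input.

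Next, invoking the identification in Example \ref{ex_gd_ct_toric_mfd}, I conclude that $N$ is $T^{n+1}$-equivariantly homeomorphic to the space $N(Q,\xi)$ of \eqref{eq_constr_DJ}. By Proposition \ref{prop_two_definitions_are_equiv}, $N(Q,\xi)$ carries a smooth $T^{n+1}$-manifold structure satisfying Definition \ref{def:axiom_top_cont}, so $N$ is a quasi-contact toric manifold.

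Finally, I would apply Theorem \ref{thm_zoro_cob} to $N(Q,\xi)$ to produce a smooth oriented $T^{n+1}$-manifold with boundary $M$ whose boundary is equivariantly diffeomorphic to $N(Q,\xi)$, hence equivariantly diffeomorphic to $N$. There is no genuine obstacle in this argument; the only subtlety is the transversality/unimodularity check for $\xi$ at vertices of $Q$, but that is exactly the content of the definition of a \emph{good} contact toric manifold (as opposed to an arbitrary one) and was already noted in Example \ref{ex_gd_ct_toric_mfd}.
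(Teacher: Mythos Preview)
Your proposal is correct and matches the paper's own proof, which is simply the one-line observation that the corollary follows from Example \ref{ex_gd_ct_toric_mfd} and Theorem \ref{thm_zoro_cob}. Your additional unpacking of the unimodularity check and the invocation of Proposition \ref{prop_two_definitions_are_equiv} is faithful to what Example \ref{ex_gd_ct_toric_mfd} already records, so there is no deviation in approach.
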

\begin{proof}
This follows from Example \ref{ex_gd_ct_toric_mfd} and Theorem \ref{thm_zoro_cob}.
\end{proof}

\begin{corollary}\label{cor_lens_sp}
Any generalized lens space is equivariantly the boundary of an oriented smooth manifold.
\end{corollary}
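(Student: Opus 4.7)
The plan is to deduce this corollary directly from the machinery already established, in complete analogy with the preceding corollary about good contact toric manifolds. The strategy has essentially two ingredients that we can combine mechanically.

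First, I will invoke Example \ref{ex_gen_lens_sp} to realize a generalized lens space $L$ as a quasi-contact toric manifold. Concretely, by the construction recalled there, for each generalized lens space $L$ there exists a hyper characteristic function $\xi$ on the $n$-simplex $\Delta^n$ such that $L$ is equivariantly diffeomorphic to $N(\Delta^n,\xi)$. In particular, $L$ carries an effective $T^{n+1}$-action with orbit space the simple polytope $\Delta^n$ satisfying Definition \ref{def:axiom_top_cont}, so it is a $(2n+1)$-dimensional quasi-contact toric manifold.

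Second, I will apply Theorem \ref{thm_zoro_cob} to the quasi-contact toric manifold $N(\Delta^n,\xi)$. That theorem produces a smooth oriented $T^{n+1}$-manifold with boundary $M$ such that $\partial M$ is equivariantly diffeomorphic to $N(\Delta^n,\xi)$, and hence to $L$. Composing with the equivariant diffeomorphism $L \cong N(\Delta^n, \xi)$ from the previous step yields an equivariant diffeomorphism $\partial M \cong L$, which is exactly the content of the corollary.

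There is no serious obstacle here: the conceptual work (the resolution of singularities via Theorem \ref{thm_res_sing} and its use inside the proof of Theorem \ref{thm_zoro_cob}) has already been done. The only thing one might bother to remark is that in the special case $Q=\Delta^n$ the auxiliary vector ${\bf a}\in \ZZ^{n+1}$ supplied by Lemma \ref{lem_out_pt} certainly exists (since $\Delta^n$ has only finitely many vertices), and the resolution applied to the prism $\Delta^n\times I$ will produce the bounding manifold exactly as in the proof of Theorem \ref{thm_zoro_cob}. Thus the proof is simply: combine Example \ref{ex_gen_lens_sp} with Theorem \ref{thm_zoro_cob}.
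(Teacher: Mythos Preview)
Your proof is correct and follows exactly the same approach as the paper, which simply writes ``This follows from Example \ref{ex_gen_lens_sp} and Theorem \ref{thm_zoro_cob}.'' Your additional remarks about Lemma \ref{lem_out_pt} and the prism $\Delta^n\times I$ are accurate but unnecessary, since they are already subsumed in the general proof of Theorem \ref{thm_zoro_cob}.
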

\begin{proof}
This follows from Example \ref{ex_gen_lens_sp} and Theorem \ref{thm_zoro_cob}.
\end{proof}

We note that the conclusion of Corollary \ref{cor_lens_sp} can be obtained from \cite{Han}. However, our proof is more geometric and explicit. We also note that the article \cite{SS} gave partial answer to Corollary \ref{cor_lens_sp} under some number theoretic sufficient conditions.

\begin{remark}
Using Theorem \ref{thm_zoro_cob} and \cite[Theorem 4.9]{MS}, we  get that all Stiefel-Whitney numbers of quasi-contact toric manifolds are zero.
\end{remark}

\subsection*{Acknowledgments}
The authors would like to thank Dong Youp Suh and Jongbaek Song for helpful discussion. The first author thanks `IIT Madras' for PhD fellowship. The second author thanks `International office IIT Madras' and Science and Engineering Research Board India for research grants. The third author thanks to IIT Madras for PDEF fellowship and IMSc for PDF fellowship.

\bibliographystyle{abbrv}

\end{document}